\documentclass[a4paper,11pt]{article}


\usepackage[a4paper,margin=2.5cm]{geometry}
\usepackage{mathtools}
\usepackage{amsmath,amsfonts,amssymb,amsthm}
\usepackage{bm,bbm}
\usepackage{graphicx}
\usepackage{units}
\usepackage[font=footnotesize,labelfont=footnotesize]{caption}
\usepackage{subcaption}
\usepackage{xcolor}
\usepackage{authblk}
\usepackage[numbers,sort&compress]{natbib}
\usepackage[bookmarks, plainpages=false, pdfpagelabels, colorlinks=true, linkcolor=orange, anchorcolor=gray, citecolor=gray, urlcolor=gray, hyperindex=true, breaklinks]{hyperref}

\usepackage{epstopdf}

\ifpdf
  \DeclareGraphicsExtensions{.eps,.pdf,.png,.jpg}
\else
  \DeclareGraphicsExtensions{.eps}
\fi

\usepackage{cleveref}
\usepackage{notations}


\newtheorem{theorem}{Theorem}[section]
\newtheorem{definition}[theorem]{Definition}
\newtheorem{remark}[theorem]{Remark}
\newtheorem{lemma}[theorem]{Lemma}
\newtheorem{proposition}[theorem]{Proposition}
\newtheorem{assumption}[theorem]{Assumption}

\title{Convergence analysis of GMRES applied to Helmholtz problems near resonances\thanks{Distributed under \href{https://creativecommons.org/licenses/by/4.0/}{Creative Commons CC BY 4.0} license.}}

\author[1]{V.~Dolean}
\author[2]{P.~Marchand}
\author[2]{A.~Modave}
\author[2]{T.~Raynaud}

\newcommand{\email}[1]{\href{mailto:#1}{#1}}
\affil[1]{\footnotesize Department of Mathematics and Computer Science, Eindhoven University of Technology, P.O. Box 513, 5600 MB Eindhoven, The Netherlands (\email{v.dolean.maini@tue.nl})}
\affil[2]{\footnotesize POEMS, CNRS, Inria, ENSTA, Institut Polytechnique de Paris, Palaiseau, France
    (\email{pierre.marchand@inria.fr}, \email{axel.modave@ensta.fr}, \email{timothee.raynaud@ensta.fr})}

\begin{document}

\date{}
\maketitle

\begin{abstract}
\noindent
The finite element solution of Helmholtz problems near resonant or quasi-resonant frequencies poses significant challenges, as iterative solvers typically suffer from severely degraded convergence.
We analyze the convergence behavior of GMRES applied to linear systems arising from such configurations.
Theoretical convergence estimates are derived based on harmonic Ritz values, highlighting their proximity to small
eigenvalues as a key determining factor.
We further examine deflation strategies and their interplay with preconditioning techniques, using the Complex Shifted Laplacian preconditioner as a case study.
Numerical experiments on resonant and quasi-resonant test cases validate the theoretical framework and demonstrate the effectiveness of deflation strategies.
This study provides new insights and practical guidance for analyzing and improving iterative solvers for time-harmonic problems near resonances.
\end{abstract}

\section{Introduction}\label{sec:intro}

The solution of time-harmonic wave propagation problems is of paramount importance in many fields of science and engineering, including acoustics, electromagnetism, and structural mechanics.
One of the most fundamental equation for modeling scalar waves is the Helmholtz equation,
\begin{align}
    \label{eq:Helmholtz}
        -\Delta u - k^2 u = f,
\end{align}
where \( k \in \mathbb{R}_{>0} \) is the wavenumber, $u$ is an unknown field, and \( f \) is a given source.
This equation is completed with boundary conditions.
Here, we are interested in the solution of~\eqref{eq:Helmholtz} when \( k^2 \) is close to an eigenvalue of the Laplacian.
Discretizing the resulting problem with the finite element method (FEM) leads to a linear system \(\mathbf{A} \mathbf{u} = \mathbf{b} \), where \( \mathbf{A} \in \mathbb{C}^{N \times N} \) is a sparse non-singular matrix which may be non-symmetric or non-Hermitian, \( \mathbf{u} \in \mathbb{C}^{N} \) is unknown, \( \mathbf{b} \in \mathbb{C}^{N} \) is given.
To ensure solution accuracy, the number of degrees of freedom per wavelength must be sufficiently high, as it must increase faster than the wavenumber to mitigate the pollution effect~\cite{BabuskaSauter1997IPE}.
For example, in dimension \(2\), the element diameter \(h\) should reduce faster than \(k^{-3/2}\) as the wavenumber \(k\) increases for piecewise linear elements (\(P_1\)).
Consequently, for large wavenumber problems, the resulting linear system can be extremely large.

Direct solvers are expensive and difficult to parallelize.
Solving finite element problems on structured grids yields an algorithmic complexity of \( \mathcal{O}(N^{3/2}) \) in 2D and \( \mathcal{O}(N^{2}) \) in 3D~\cite{DuffErismanEtAl2017DMS}.
Iterative methods, such as Krylov subspace methods, are more commonly used to solve large-scale problems.
These methods are easier to parallelize since they only require matrix-vector product computations.
The key challenge lies in controlling the number of iterations needed to achieve a given tolerance~\cite{Saad2003IMS}.

For non-normal matrices, the Generalized Minimal Residual (GMRES) method is a Krylov method of choice~\cite{SaadSchultz1986GGM}.
Although GMRES convergence has been widely studied and several bounds have been established, there is no universally applicable estimate for all classes of matrices~\cite{Embree2022HDA}.
In particular, GMRES convergence in the case of discretized Helmholtz problems for different case scenarios remains an open problem due to its highly indefinite or non-self-adjoint nature, as seen in previous studies~\cite{Erlangga2007AIM,ErnstGander2012WII,MarchandGalkowskiEtAl2022AGH}.
In addition, when the wavenumber is close to a resonance value, the number of GMRES iterations required to achieve a given tolerance can significantly increase.
Efficient acceleration techniques tailored for this class of problems must be used~\cite{Saad2003IMS,Vuik2018KSS}.
This topic is currently the focus of extensive research within the community. Acceleration techniques have been developed specifically for Helmholtz problems and more generally for non-normal cases, for example Complex Shifted Laplacian (CSL) preconditioning strategies~\cite{ErlanggaOosterleeEtAl2006NMB,ErlanggaVuikEtAl2006CMI,CoolsVanroose2013LFA,GanderGrahamEtAl2015AGH,RamosNabben2020TLS,RamosSeteEtAl2021PHE}, domain decomposition methods~\cite{DoleanJolivetEtAl2015IDD,DoleanFryEtAl2025AWR}, and deflation techniques~\cite{GarciaRamosKehlEtAl2020PDM,DwarkaVuik2022SML,SpillaneSzyld2024NCA}.

In this work, we focus on GMRES convergence for a class of quasi-resonant Helmholtz problems.
Resonances are eigenvalues of the Laplace's operator in a given domain, where the Helmholtz problem~\eqref{eq:Helmholtz} becomes ill-posed.
Quasi-resonances refer to an increasing sequence of wavenumbers where the norm of the inverse Helmholtz operator grows rapidly~\cite{GalkowskiMarchandEtAl2021ETH}.
Previous studies~\cite{MarchandGalkowskiEtAl2022AGH} provided estimates on the increase in the number of GMRES iterations for a sequence of quasi-modes as the frequency increases.
Here, we aim at a new understanding of these convergence behaviors based on
GMRES approximation of the underlying linear system's properties.
We also provide suggestions for studying the robustness of iterative solvers in these cases.
The contribution of deflation, which can be combined with preconditioning techniques, is highlighted.
This is particularly interesting for cases involving quasi-modes and modes.

The main aim of this paper is to investigate the relationships between GMRES convergence rates and the properties of linear systems associated with Helmholtz problems near resonances.
More specifically our main contributions are:
\begin{itemize}
    \setlength{\itemsep}{0pt}
    \setlength{\parskip}{0pt}
    \setlength{\parsep}{0pt}
    \item to generalize a GMRES residual bound based on harmonic Ritz values;
    \item to show that the approximation of every eigenvalue or group of eigenvalues by harmonic Ritz values affects convergence;
    \item to apply deflation techniques with a suitable choice of eigenvectors adapted to resonance problems to accelerate GMRES convergence and combine them with a preconditioner;
    \item to understand the numerical behavior of GMRES for different cases scenarios and acceleration methods.
\end{itemize}
The remainder of the paper is structured as follows. \Cref{sec:gmres} introduces the GMRES method and reviews key convergence properties. We then extend a GMRES residual bound based on harmonic Ritz values, showing that when these values approach a subset of eigenvalues closely enough, the corresponding eigenvalues cease to hinder convergence. In \Cref{sec:helmholtz}, we present the continuous and discretized Helmholtz problem and the settings in which resonances and quasi-resonances arise.
To efficiently solve the problem for cases near to resonances or quasi-resonances, we study and highlight the interest of a deflation technique, and we investigate its combination with a preconditioning strategy.
For this analysis, we selected the  Complex Shifted Laplacian (CSL) preconditioner with an incomplete LU (ILU) factorization as a case of study.
 Finally, in \Cref{sec:cavityProblem,sec:scatteringProblems}, we investigate numerical benchmarks near resonant regimes. These cases illustrate how the proposed tools influence GMRES convergence and allow us to assess the effectiveness and robustness of the acceleration techniques.

\section{GMRES}\label{sec:gmres}

The Generalized Minimal Residual (GMRES) method~\cite{SaadSchultz1986GGM} is a popular iterative algorithm for solving general linear systems.
In this section, we introduce its main components and discuss a standard convergence result.
We then propose new convergence results based on harmonic Ritz (HR) values, which allow a better interpretation of the GMRES residual evolution.


\subsection{Key ideas}\label{subsec:GMRESConvergence}

The GMRES method belongs to the class of Krylov subspace iterative methods, leveraging the Krylov subspace defined as
\begin{equation*}
    \mathcal{K}_l(\mathbf{A}, \mathbf{r}_0) := \mathrm{span}\{\mathbf{r}_0, \mathbf{A}\mathbf{r}_0, \mathbf{A}^2\mathbf{r}_0, \ldots, \mathbf{A}^{l-1}\mathbf{r}_0\},
    \quad \text{for $l<N$},
\end{equation*}
where $\mathbf{r}_0 := \mathbf{b} - \mathbf{A}\mathbf{x}_0$ is the initial residual and $\mathbf{x}_0$ is an initial guess.
At each iteration $l < N$, the approximate solution $\mathbf{x}_l$ belongs to $\mathbf{x}_0 + \mathcal{K}_l(\mathbf{A}, \mathbf{r}_0)$, and it is computed such that the residual $\mathbf{r}_l = \mathbf{b} - \mathbf{A}\mathbf{x}_l$ has minimum norm, i.e.
\begin{equation}
    \mathbf{x}_l := \argmin_{\mathbf{x} \in \mathbf{x}_0 + \mathcal{K}_l(\mathbf{A}, \mathbf{r}_0)} \|\mathbf{b} - \mathbf{A}\mathbf{x}\|_2.
    \label{eq:minResidual0}
\end{equation}
A critical component of GMRES is the Arnoldi process, which generates an orthonormal basis for the Krylov subspace.
The basis vectors are the columns of a matrix $\mathbf{U}_l\in\mathbb{C}^{N\times l}$.
This basis is used to compute the restriction of $\mathbf{A}$ to the Krylov subspace, $\mathbf{H}_l = \mathbf{U}_l^*\mathbf{A}\mathbf{U}_l \in \mathbb{C}^{l\times l}$, which is an upper Hessenberg matrix.
Each iteration of the GMRES algorithm can be summarized as follows: perform the Arnoldi process to compute $\mathbf{U}_l$ and $\mathbf{H}_l$, then solve the least-squares problem $\min_{\mathbf{y}} \|\beta \mathbf{e}_1 - \mathbf{H}_l\mathbf{y}\|_2$ with $\beta = \|\mathbf{r}_0\|_2$ and $\mathbf{e}_1$ the first canonical vector, and finally form the approximate solution $\mathbf{x}_l = \mathbf{x}_0 + \mathbf{U}_l\mathbf{y}$, see e.g.~\cite{Saad2003IMS}.


\subsection{Convergence results}
Several convergence bounds have been developed to analyze the behavior of GMRES, often by expressing the residual vector in terms of polynomials; see, for example,~\cite{Embree2022HDA}. The residual at iteration~$l$ satisfies the following relation:
\begin{equation}
    \label{eq:minResidual}
    \twonorm{\mathbf{r}_l} = \min_{q_l \in \mathcal{P}_l^1} \twonorm{q_l(\mathbf{A}) \mathbf{r}_0},
\end{equation}
where $\mathcal{P}_l^1$ is the set of polynomials of degree at most~$l$ with the constraint $q_l(0) = 1$. Throughout this paper, we denote by $p_l \in \mathcal{P}_l^1$ the polynomial that achieves this minimum, such that
\begin{equation}\label{eq:pl_def}
    \mathbf{r}_l = p_l(\mathbf{A}) \mathbf{r}_0.
\end{equation}

GMRES convergence can be understood in terms of the eigenvalue distribution of~$\mathbf{A}$ and the conditioning of its eigenvectors. If $\mathbf{A}$ is diagonalizable, we can write $\mathbf{A} = \mathbf{V} \mathbf{\Lambda} \mathbf{V}^{-1}$. For any polynomial $q_l \in \mathcal{P}_l^1$, it follows that:
\begin{equation*}
    \twonorm{q_l(\mathbf{A}) \mathbf{r}_0}
    = \twonorm{\mathbf{V} q_l(\mathbf{\Lambda}) \mathbf{V}^{-1} \mathbf{r}_0}
    \leq \kappa_2(\mathbf{V}) \twonorm{q_l(\mathbf{\Lambda})} \twonorm{\mathbf{r}_0},
\end{equation*}
where $\kappa_2(\mathbf{V}) := \twonorm{\mathbf{V}} \twonorm{\mathbf{V}^{-1}}$ is the condition number of the eigenvector matrix. This leads to the classical convergence bound~\cite{SaadSchultz1986GGM}:
\begin{equation}
    \label{eq:cvboundEV}
    \frac{\twonorm{\mathbf{r}_l}}{\twonorm{\mathbf{r}_0}} \leq \kappa_2(\mathbf{V}) \min_{q_l \in \mathcal{P}_l^1} \max_{\lambda \in \sigma(\mathbf{A})} \abs{q_l(\lambda)},
\end{equation}
where $\sigma(\mathbf{A})$ denotes the set of eigenvalues of $\mathbf{A}$. If $\mathbf{A}$ is normal, then $\kappa_2(\mathbf{V}) = 1$, and the convergence depends solely on the eigenvalue distribution. In contrast, for non-normal matrices, $\kappa_2(\mathbf{V})$ can be large, making the bound in~\eqref{eq:cvboundEV} less predictive of actual convergence behavior~\cite{Embree2022HDA}.

Alternative convergence bounds relate to other properties of $\mathbf{A}$, such as its field of values~\cite{BeckermannGoreinovEtAl2005SRE,Embree2025EEB,CrouzeixPalencia2017NRI} or its pseudospectrum~\cite{TrefethenEmbree2005SPB}.
While these bounds have distinct advantages, none of them provides an accurate or systematic prediction of GMRES convergence.
Moreover, they describe a linear asymptotic convergence rate~\cite{Embree2022HDA}, which fails to capture specific convergence behaviors, such as stagnation phases followed by rapid decays.
Such patterns can be observed in the GMRES residual history, and can occur close to resonances (see \Cref{fig:cavityTrajRes} and \Cref{sec:cavityProblem,sec:scatteringProblems}).
To better understand this phenomenon, it is crucial to highlight the role of small eigenvalues related to these resonances.
This need motivates our interest in harmonic Ritz values.
These results also highlight how both spectral and non-normality properties of $\mathbf{A}$ shape the convergence of GMRES.\


\subsection{Harmonic Ritz values}\label{sec:harmonicRitzvalues}
Harmonic Ritz (HR) values provide estimates of the eigenvalues of $\mathbf{A}$; see, for example,~\cite[Chapter 26]{TrefethenEmbree2005SPB}. They are defined as follows:
\begin{definition}[Harmonic Ritz (HR) values]\label{def:harmonicRitzValues}
    At iteration $l > 0$, the \emph{harmonic Ritz values} ${\{\nu_j^{(l)}\}}_{j=1}^l$ are the roots of the minimizing polynomial $p_l \in \mathcal{P}_l^1$ from~\eqref{eq:minResidual}.
\end{definition}
By construction, the number of HR values increases with each GMRES iteration. Since $p_l(0) = 1$, none of the HR values can be zero. As a result, the polynomial $p_l$ can be factored as:
\begin{equation}
    \label{eq:minpol}
        p_l(z) = \prod_{j=1}^{l} \left(1 - \frac{z}{\nu_j^{(l)}}\right).
\end{equation}
The HR values satisfy a useful lower bound:
\begin{proposition}[Lower bound~{\cite[Theorem 4]{Cao1997NCB}}]\label{prop:boundHR}
    Assume $\mathbf{A}$ is non-singular.
    At iteration $l>0$, the HR values satisfy
    \begin{equation*}
        \abs{\nu_j^{(l)}} \geq s_{\min}(\mathbf{A}),
        \quad \forall j=1,\ldots,l,
    \end{equation*}
    where $s_{\min}(\mathbf{A})$ is the smallest singular value of $\mathbf{A}$.
\end{proposition}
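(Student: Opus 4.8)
The plan is to recast each harmonic Ritz value as a genuine harmonic Rayleigh--Ritz eigenvalue attached to an explicit, nonzero vector of the Krylov subspace, and then control it by a Rayleigh-quotient argument. The only property of GMRES I need is the optimality in~\eqref{eq:minResidual0} expressed as an orthogonality condition: because $\mathbf{r}_l$ has minimal norm over $\mathbf{x}_0 + \mathcal{K}_l(\mathbf{A},\mathbf{r}_0)$, it is the least-squares residual of the projection onto $\mathbf{A}\mathcal{K}_l(\mathbf{A},\mathbf{r}_0)$, hence $\mathbf{r}_l \perp \mathbf{A}\mathcal{K}_l(\mathbf{A},\mathbf{r}_0)$, with $\mathbf{r}_l = p_l(\mathbf{A})\mathbf{r}_0$.

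First I would fix a root $\nu := \nu_j^{(l)}$ and factor the minimizing polynomial from~\eqref{eq:minResidual} as $p_l(z) = (1 - z/\nu)\,g(z)$. Since $p_l(0)=1$ forces every root to be nonzero, the cofactor $g(z) = \prod_{i\neq j}(1 - z/\nu_i^{(l)})$ has exact degree $l-1$ and satisfies $g(0)=1$. Setting $v := g(\mathbf{A})\mathbf{r}_0 \in \mathcal{K}_l(\mathbf{A},\mathbf{r}_0)$, this vector is nonzero in the non-breakdown case where the Krylov space has full dimension $l$ (which is exactly when $l$ harmonic Ritz values are defined), because $\{\mathbf{r}_0,\ldots,\mathbf{A}^{l-1}\mathbf{r}_0\}$ is then linearly independent and $g$ has full degree $l-1$. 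A direct computation gives $\mathbf{r}_l = (\mathbf{I} - \mathbf{A}/\nu)\,v = -\nu^{-1}(\mathbf{A}v - \nu v)$, so that $\mathbf{A}v - \nu v = -\nu\,\mathbf{r}_l$ inherits the orthogonality $\mathbf{A}v - \nu v \perp \mathbf{A}\mathcal{K}_l(\mathbf{A},\mathbf{r}_0)$. Since $v \in \mathcal{K}_l(\mathbf{A},\mathbf{r}_0)$ implies $\mathbf{A}v \in \mathbf{A}\mathcal{K}_l(\mathbf{A},\mathbf{r}_0)$, testing this orthogonality against $\mathbf{A}v$ yields $(\mathbf{A}v)^*(\mathbf{A}v - \nu v) = 0$, the defining harmonic Rayleigh--Ritz relation for the pair $(\nu, v)$.

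The last step is then a singular-value estimate on a Rayleigh-type quotient. Solving the relation for $\nu$ gives
\[
    \nu = \frac{\twonorm{\mathbf{A}v}^2}{(\mathbf{A}v)^* v},
\]
whose denominator cannot vanish, since otherwise $\mathbf{A}v=0$ would contradict $v\neq 0$ and the non-singularity of $\mathbf{A}$. Applying the Cauchy--Schwarz inequality to the denominator and then the variational characterization $s_{\min}(\mathbf{A}) = \min_{\mathbf{x}\neq 0}\twonorm{\mathbf{A}\mathbf{x}}/\twonorm{\mathbf{x}}$, I obtain
\[
    \abs{\nu} = \frac{\twonorm{\mathbf{A}v}^2}{\abs{(\mathbf{A}v)^* v}}
    \geq \frac{\twonorm{\mathbf{A}v}^2}{\twonorm{\mathbf{A}v}\,\twonorm{v}}
    = \frac{\twonorm{\mathbf{A}v}}{\twonorm{v}}
    \geq s_{\min}(\mathbf{A}),
\]
which is the claimed bound, as $j$ was arbitrary.

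The main obstacle is the structural first step rather than the final inequality: one must pass from ``$\nu$ is a root of the minimizing polynomial'' to ``$\nu$ is a harmonic Rayleigh--Ritz value with an explicit, nonzero eigenvector $v\in\mathcal{K}_l(\mathbf{A},\mathbf{r}_0)$.'' Everything hinges on the factorization $p_l=(1-z/\nu)g$ delivering a vector $v=g(\mathbf{A})\mathbf{r}_0$ that is simultaneously nonzero and such that $\mathbf{A}v-\nu v$ lies in the orthogonal complement of $\mathbf{A}\mathcal{K}_l(\mathbf{A},\mathbf{r}_0)$; once this identity is secured, the Cauchy--Schwarz bound is immediate. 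I would therefore be careful to state the non-breakdown hypothesis under which exactly $l$ harmonic Ritz values exist, so that $g$ attains its full degree $l-1$ and $v\neq 0$ is guaranteed.
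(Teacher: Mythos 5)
Your proof is correct and complete. Note that the paper itself offers no proof of this proposition: it is quoted directly from \cite[Theorem 4]{Cao1997NCB}, so the only comparison available is with the cited argument. The route behind the citation works from the eigenproblem characterization of harmonic Ritz values: $\nu$ is a harmonic Ritz value precisely when $1/\nu$ is an orthogonal-projection Ritz value of $\mathbf{A}^{-1}$ with respect to the subspace $\mathbf{A}\mathcal{K}_l(\mathbf{A},\mathbf{r}_0)$, whence $|1/\nu| \leq \|\mathbf{A}^{-1}\|_2 = 1/s_{\min}(\mathbf{A})$ follows immediately from the fact that Rayleigh quotients are bounded by the operator norm. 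You instead start from the definition the paper actually uses (\cref{def:harmonicRitzValues}: roots of the GMRES residual polynomial from~\eqref{eq:minResidual}) and rebuild the harmonic Rayleigh--Ritz pair by hand: factor out the root, check that the cofactor vector $v = g(\mathbf{A})\mathbf{r}_0$ is a nonzero element of $\mathcal{K}_l(\mathbf{A},\mathbf{r}_0)$, convert GMRES minimality into the Petrov--Galerkin orthogonality $\mathbf{r}_l \perp \mathbf{A}\mathcal{K}_l(\mathbf{A},\mathbf{r}_0)$, test against $\mathbf{A}v$, and finish with Cauchy--Schwarz. This is the classical equivalence between the polynomial-root and eigenproblem definitions of harmonic Ritz values, and your derivation of it is sound; what it buys is a proof that is self-contained relative to the paper's definition, whereas the reciprocal-Ritz route would first require establishing that the roots of $p_l$ coincide with the solutions of the harmonic eigenproblem.

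Two minor remarks. First, you do not need $g$ to have exact degree $l-1$: any nonzero polynomial of degree at most $l-1$ gives $g(\mathbf{A})\mathbf{r}_0 \neq 0$ once $\dim \mathcal{K}_l(\mathbf{A},\mathbf{r}_0) = l$, which is all the argument uses. Second, your parenthetical claim that full Krylov dimension is ``exactly when $l$ harmonic Ritz values are defined'' is slightly too strong: full Krylov dimension does not prevent $p_l$ from being degree-deficient (under complete stagnation $p_l \equiv 1$), in which case some harmonic Ritz values escape to infinity. For those the bound is vacuous, and your argument applies unchanged to every finite root, so this does not affect correctness.
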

As iterations progress, the HR values tend to approach the eigenvalues of $\mathbf{A}$ increasingly well. They converge to the full spectrum after $N$ iterations, or earlier in cases of ``lucky breakdown''~\cite[Chapter 26]{TrefethenEmbree2005SPB}. Proposition~\ref{prop:boundHR} guarantees that these approximations remain bounded away from zero as long as $\mathbf{A}$ is non-singular.

\begin{remark}[Ritz values]
The Ritz values are the eigenvalues of the upper Hessenberg matrix $\mathbf{H}_l$, and they coincide with the roots of the minimizing polynomial in the Full Orthogonalization Method (FOM). Like the HR values, Ritz values serve as approximations of the eigenvalues of $\mathbf{A}$. In~\cite{SluisVorst1986RCC}, Ritz values are used to analyze the convergence of the conjugate gradient method for symmetric positive definite systems. This approach is extended in~\cite{VorstVuik1993SCB} to study the convergence of GMRES for general non-normal matrices. However, because Ritz values are associated with FOM rather than GMRES, the analysis must relate the residual norms of the two methods.
As a result, the residual bounds derived from Ritz values involve a factor larger than $1$ that depends on the iteration~\cite[Theorems 2.5 and 2.6]{VorstVuik1993SCB}.
A more natural bound can be obtained using HR values~\cite{Cao1997NCB}, leading to a similar result as in~\cite{VorstVuik1993SCB}, but improved in that the factor no longer appears~\cite[Theorem 6]{Cao1997NCB}.
\end{remark}


\subsection{Convergence results with HR values}\label{sec:harmonicRitzGMRES}
For the reasons stated in Remark 2.3, Cao~\cite{Cao1997NCB} proposed such a bound by linking GMRES convergence to the approximation of the smallest eigenvalue by an HR value. Building on this idea, we introduce two generalizations that consider the approximation of an entire subset of eigenvalues by corresponding HR values, rather than just a single one. This leads to a natural partition of the spectrum into two groups: the eigenvalues closely approximated by HR values, and the remaining ones. This strategy is reminiscent of spectral clustering techniques that separate eigenvalue distributions into dominant clusters and peripheral outliers~\cite{CampbellIpsenEtAl1996GMP,MarchandGalkowskiEtAl2022AGH}.

To formalize this spectral separation and derive quantitative bounds, we now consider the application of GMRES to a linear system of the form $\mathbf{A} \mathbf{x} = \mathbf{b}$, with $\mathbf{A} \in \mathbb{C}^{N \times N}$ and $\mathbf{b} \in \mathbb{C}^N$. We denote the residual vector at iteration $l$ by $\mathbf{r}_l$.
Ultimately, the matrix $\mathbf{A}$ will come from the finite elements discretization of a physical problem.
The convergence results presented in this section then rely on the following assumption.

\begin{assumption}\label{hyp:Adiag}
The matrix $\mathbf{A}$ is non-singular and diagonalizable.
\end{assumption}

We aim to analyze the residual evolution between two iterations, $l$ and $l + m$, assuming $0 < l < l + m < N$. We begin with a preliminary result that sets the stage for the generalized bounds.

\begin{lemma}\label{lem:convergenceBound}
    Suppose that Assumption~\ref{hyp:Adiag} holds.
    Let $J \in [1, l]$, and consider a set $N_J^{(l)}$ of $J$ HR values at iteration $l$, along with a set $\Lambda_J$ of $J$ eigenvalues of $\mathbf{A}$.
    Then, for any $m > 0$ such that $l + m < N$, the following bound holds:
    \begin{equation*}
        \frac{\twonorm{ \mathbf{r}_{l+m} }}{\twonorm{ \mathbf{r}_l }} \leq \min_{q_{m} \in \mathcal{P}_{m}^1} \twonorm{q_{m}(\mathbf{A})\, s_{J}^{l}(\mathbf{A})},
    \end{equation*}
    where
    \begin{equation}
        s_J^{l}(z) := \frac{\prod_{\lambda_j \in \Lambda_J} \left(1 - \frac{z}{\lambda_j}\right)}{\prod_{\nu_j^{(l)} \in N_J^{(l)}} \left(1 - \frac{z}{\nu_j^{(l)}}\right)}.
        \label{eq:sJl}
    \end{equation}
\end{lemma}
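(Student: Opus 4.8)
The plan is to exploit the residual-minimization characterization~\eqref{eq:minResidual} at iteration $l+m$ and to feed it a carefully designed suboptimal polynomial, so that the resulting bound involves $\mathbf{r}_l$ rather than $\mathbf{r}_0$. The guiding idea is that the GMRES polynomial $p_l$ already has the $l$ harmonic Ritz values as its roots (by \cref{def:harmonicRitzValues} and~\eqref{eq:minpol}); I would modify it by \emph{swapping} the $J$ roots lying in $N_J^{(l)}$ for the $J$ eigenvalues in $\Lambda_J$, and then append an arbitrary factor $q_m \in \mathcal{P}_m^1$ to lift the degree to $l+m$. The rational function $s_J^l$ defined in~\eqref{eq:sJl} encodes exactly this swap.

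First I would verify that $g := p_l\, s_J^l$ is a genuine polynomial of degree $l$ with $g(0)=1$. Indeed, since $N_J^{(l)}$ is a subset of the $l$ harmonic Ritz values, the denominator of $s_J^l$ cancels the corresponding $J$ factors of $p_l$, leaving
\begin{equation*}
    g(z) = \prod_{\nu_j^{(l)} \notin N_J^{(l)}} \left(1 - \frac{z}{\nu_j^{(l)}}\right) \prod_{\lambda_j \in \Lambda_J} \left(1 - \frac{z}{\lambda_j}\right),
\end{equation*}
a product of $(l-J)+J = l$ affine factors, each equal to $1$ at $z=0$; hence $g \in \mathcal{P}_l^1$. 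Consequently $q_m\, g \in \mathcal{P}_{l+m}^1$ for every $q_m \in \mathcal{P}_m^1$, so $q_m g$ is an admissible competitor in the minimization defining $\mathbf{r}_{l+m}$.

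Next I would apply~\eqref{eq:minResidual}: for any $q_m \in \mathcal{P}_m^1$,
\begin{equation*}
    \twonorm{\mathbf{r}_{l+m}} \leq \twonorm{q_m(\mathbf{A})\, g(\mathbf{A})\, \mathbf{r}_0}.
\end{equation*}
Because $\mathbf{A}$ is diagonalizable (\cref{hyp:Adiag}) and the harmonic Ritz values are nonzero by \cref{prop:boundHR}, the functional calculus lets me write $g(\mathbf{A}) = s_J^l(\mathbf{A})\, p_l(\mathbf{A})$, all factors being functions of the single matrix $\mathbf{A}$ and therefore commuting. Using $p_l(\mathbf{A})\mathbf{r}_0 = \mathbf{r}_l$ gives $g(\mathbf{A})\mathbf{r}_0 = s_J^l(\mathbf{A})\mathbf{r}_l$, and submultiplicativity of the operator norm yields
\begin{equation*}
    \twonorm{\mathbf{r}_{l+m}} \leq \twonorm{q_m(\mathbf{A})\, s_J^l(\mathbf{A})}\, \twonorm{\mathbf{r}_l}.
\end{equation*}
Dividing by $\twonorm{\mathbf{r}_l}$ and taking the minimum over $q_m \in \mathcal{P}_m^1$ (the left-hand side being independent of $q_m$) delivers the claimed estimate.

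The main obstacle I anticipate is not the algebra but the bookkeeping that makes $s_J^l(\mathbf{A})$ a legitimate matrix function: one must read the factorization $g = p_l\, s_J^l$ correctly as a cancellation of common factors (so that $g$ is polynomial and admissible) while ensuring that $s_J^l(\mathbf{A})$ on its own is well defined. The diagonalizability assumption is what secures this — writing $\mathbf{A} = \mathbf{V}\mathbf{\Lambda}\mathbf{V}^{-1}$ realizes $s_J^l(\mathbf{A}) = \mathbf{V}\, s_J^l(\mathbf{\Lambda})\, \mathbf{V}^{-1}$, which is finite provided no harmonic Ritz value in $N_J^{(l)}$ coincides with an eigenvalue of $\mathbf{A}$; this is the generic situation at a finite iteration $l < N$, and it is precisely where both \cref{hyp:Adiag} and \cref{prop:boundHR} enter. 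Once this is in place, the remaining steps are the standard minimization and norm inequalities.
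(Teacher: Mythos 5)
Your proposal is correct and follows essentially the same route as the paper: both construct the competitor polynomial $q_m\, s_J^l\, p_l \in \mathcal{P}_{l+m}^1$ (your $g = p_l\, s_J^l$ is just the paper's cancellation step made explicit), invoke GMRES optimality at iteration $l+m$, substitute $p_l(\mathbf{A})\mathbf{r}_0 = \mathbf{r}_l$, and finish with submultiplicativity and the minimum over $q_m$. Your closing remark on the well-definedness of $s_J^l(\mathbf{A})$ when an HR value coincides with an eigenvalue is a caveat the paper leaves implicit; it is needed for the right-hand side of the bound to make sense at all, and is not a flaw in your argument.
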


\begin{proof}
    Recall from~\eqref{eq:pl_def} that \( p_l \in \mathcal{P}_l^1 \) denote the GMRES minimizing polynomial at iteration $l$, so that \( \mathbf{r}_l = p_l(\mathbf{A}) \mathbf{r}_0 \).
    For any polynomial \( q_m \in \mathcal{P}_m^1 \), define a new polynomial of degree \( l + m \) by:
    \[
    \tilde{p}_{l+m}(z) := q_m(z) \, s_J^l(z) \, p_l(z).
    \]
    Since the denominator of $s_J^l$ consists of HR values—i.e., roots of $p_l$~\eqref{eq:minpol}—these cancel out, ensuring that \( \tilde{p}_{l+m}(z) \) is a well-defined polynomial of degree at most \( l + m \). Furthermore, \( \tilde{p}_{l+m}(0) = q_m(0) \cdot s_J^l(0) \cdot p_l(0) = 1 \), so \( \tilde{p}_{l+m} \in \mathcal{P}_{l+m}^1 \).

    From the GMRES optimality property~\eqref{eq:minResidual} at iteration \( l + m \), we have:
    \[
    \twonorm{ \mathbf{r}_{l+m} } \leq \min_{q_{l+m} \in \mathcal{P}_{l+m}^1} \twonorm{q_{l+m}(\mathbf{A}) \mathbf{r}_0}.
    \]
    In particular, using \( \tilde{p}_{l+m} \) yields:
    \begin{align*}
        \twonorm{ \mathbf{r}_{l+m} } &\leq \twonorm{\tilde{p}_{l+m}(\mathbf{A}) \mathbf{r}_0} \\
        &= \twonorm{q_m(\mathbf{A}) \, s_J^l(\mathbf{A}) \, p_l(\mathbf{A}) \mathbf{r}_0} \\
        &= \twonorm{q_m(\mathbf{A}) \, s_J^l(\mathbf{A}) \, \mathbf{r}_l} \\
        &\leq \twonorm{q_m(\mathbf{A}) \, s_J^l(\mathbf{A})} \, \twonorm{\mathbf{r}_l}.
    \end{align*}
    Taking the minimum over all \( q_m \in \mathcal{P}_m^1 \) completes the proof.
\end{proof}

\Cref{lem:convergenceBound} provides a general residual estimate based on the positions of the HR values relative to the eigenvalues. To make this bound more explicit and interpretable, we now use the eigendecomposition of $\mathbf{A}$ to express the norm \( q_m(\mathbf{A}) s_J^l(\mathbf{A}) \) in spectral terms. This leads to our first main result, which quantifies the effect of the remaining eigenvalues—those not approximated by HR values—on the residual evolution.

\begin{theorem}[First residual estimate]\label{thm:harmonicRitzValues} If the assumptions of \Cref{lem:convergenceBound} hold, then, for iterations $l$ and $l+m$, we have
    \begin{align*}
        \frac{\twonorm{ \mathbf{r}_{l+m} }}{\twonorm{ \mathbf{r}_l }}  \leq \left( \sum_{\lambda_i\in \Lambda_J^c} \kappa(\lambda_i) \right) \ \max_{\lambda_i \in \Lambda_J^c} \abs{s_J^{l}(\lambda_i)} \ \min_{q_{m}\in \mathcal{P}_{m}^1} \max_{\lambda_i \in \Lambda_J^c} \abs{ q_{m}(\lambda_i) },
    \end{align*}
    with $\Lambda_J^c := \sigma(\mathbf{A}) \setminus \Lambda_J$ and $\kappa(\lambda_i) := \twonorm{\hat{\mathbf{v}}_i}\twonorm{\mathbf{v}_i}$, where $\hat{\mathbf{v}}_i$ and $\mathbf{v}_i \in \C^{n}$ are the left and right eigenvectors associated to $\lambda_i\in\sigma(\mathbf{A})$, with $\hat{\mathbf{v}}_i^{*}\mathbf{v}_i=1$.
\end{theorem}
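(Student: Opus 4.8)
The plan is to start from \cref{lem:convergenceBound} and convert the operator norm $\twonorm{q_m(\mathbf{A})\,s_J^l(\mathbf{A})}$ into a spectral quantity via the eigendecomposition, while exploiting the fact that $s_J^l$ is constructed to annihilate the eigenvalues in $\Lambda_J$. Under \cref{hyp:Adiag}, I would write $\mathbf{A}=\mathbf{V}\mathbf{\Lambda}\mathbf{V}^{-1}$, and observe that any function $g$ applied to $\mathbf{A}$ admits the spectral representation $g(\mathbf{A})=\sum_{i=1}^{N} g(\lambda_i)\,\mathbf{v}_i\hat{\mathbf{v}}_i^{*}$, where $\mathbf{v}_i$ and $\hat{\mathbf{v}}_i$ are the right and left eigenvectors normalized so that $\hat{\mathbf{v}}_i^{*}\mathbf{v}_i=1$ (the spectral projectors $\mathbf{P}_i=\mathbf{v}_i\hat{\mathbf{v}}_i^{*}$). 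Applying this to $g=q_m\cdot s_J^l$, the key simplification is that $s_J^l(\lambda_j)=0$ for every $\lambda_j\in\Lambda_J$, since $\lambda_j$ is a root of the numerator of $s_J^l$. Hence the sum collapses onto the complementary set $\Lambda_J^c=\sigma(\mathbf{A})\setminus\Lambda_J$.

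The second step is a triangle-inequality estimate of the resulting finite sum. Writing
\begin{equation*}
    q_m(\mathbf{A})\,s_J^l(\mathbf{A}) = \sum_{\lambda_i\in\Lambda_J^c} q_m(\lambda_i)\,s_J^l(\lambda_i)\,\mathbf{v}_i\hat{\mathbf{v}}_i^{*},
\end{equation*}
I would bound $\twonorm{q_m(\mathbf{A})\,s_J^l(\mathbf{A})} \le \sum_{\lambda_i\in\Lambda_J^c} \abs{q_m(\lambda_i)}\,\abs{s_J^l(\lambda_i)}\,\twonorm{\mathbf{v}_i\hat{\mathbf{v}}_i^{*}}$, using $\twonorm{\mathbf{v}_i\hat{\mathbf{v}}_i^{*}}=\twonorm{\mathbf{v}_i}\twonorm{\hat{\mathbf{v}}_i}=\kappa(\lambda_i)$ for the rank-one projector. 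Then I would factor out the scalar maxima: replace $\abs{s_J^l(\lambda_i)}$ by $\max_{\lambda_i\in\Lambda_J^c}\abs{s_J^l(\lambda_i)}$ and $\abs{q_m(\lambda_i)}$ by $\max_{\lambda_i\in\Lambda_J^c}\abs{q_m(\lambda_i)}$, which separates the three factors and leaves $\sum_{\lambda_i\in\Lambda_J^c}\kappa(\lambda_i)$ as the conditioning prefactor. This yields the bound for a fixed $q_m$, and taking the minimum over $q_m\in\mathcal{P}_m^1$ on the right-hand side—noting the minimum only acts on the $q_m$-dependent factor—produces the stated inequality.

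The main obstacle is establishing the spectral projector representation cleanly for a general diagonalizable (possibly non-normal) matrix, where left and right eigenvectors differ and the biorthogonality $\hat{\mathbf{v}}_i^{*}\mathbf{v}_j=\delta_{ij}$ must be invoked carefully; this is what makes the prefactor $\kappa(\lambda_i)=\twonorm{\hat{\mathbf{v}}_i}\twonorm{\mathbf{v}_i}$ appear rather than the single global $\kappa_2(\mathbf{V})$ of the classical bound~\eqref{eq:cvboundEV}. The subtle point is that the outer matrix norm of a sum of rank-one terms is being bounded termwise by the triangle inequality, which is where the sum of individual condition numbers $\sum_i\kappa(\lambda_i)$ enters; one should remark that this is potentially sharper than $\kappa_2(\mathbf{V})$ because it only accounts for the eigenvectors in $\Lambda_J^c$. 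The remaining manipulations—pulling scalar maxima out of the sum and commuting the minimum over $q_m$ past the $s_J^l$ factor—are routine once the representation is in place.
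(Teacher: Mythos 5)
Your proposal is correct and follows essentially the same route as the paper's proof: starting from \cref{lem:convergenceBound}, expanding $q_m(\mathbf{A})\,s_J^l(\mathbf{A})$ in the biorthogonal spectral basis $\sum_i q_m(\lambda_i) s_J^l(\lambda_i)\,\mathbf{v}_i\hat{\mathbf{v}}_i^{*}$, dropping the terms in $\Lambda_J$ where $s_J^l$ vanishes, and then applying the triangle inequality with the rank-one norm identity $\twonorm{\mathbf{v}_i\hat{\mathbf{v}}_i^{*}}=\twonorm{\mathbf{v}_i}\twonorm{\hat{\mathbf{v}}_i}$ before factoring out the maxima and minimizing over $q_m$. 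The only cosmetic difference is that the paper writes the expansion directly as $\mathbf{V}q_m(\mathbf{D})s_J^l(\mathbf{D})\mathbf{V}^{-1}$ rather than invoking a general spectral-representation formula, but the steps are identical.
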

\begin{proof}
Since $\mathbf{A}$ is diagonalizable, it admits the decomposition $\mathbf{A} = \mathbf{V} \mathbf{\Lambda} \mathbf{V}^{-1}$, where $\mathbf{V} \in \mathbb{C}^{N \times N}$ is the matrix of right eigenvectors, and $\mathbf{\Lambda} = \mathrm{diag}(\lambda_1, \ldots, \lambda_N)$ is the diagonal matrix of eigenvalues. We denote by $\mathbf{v}_i$ the $i$-th column of $\mathbf{V}$ (the right eigenvector associated with $\lambda_i$), and by $\hat{\mathbf{v}}_i^*$ the $i$-th row of $\mathbf{V}^{-1}$, corresponding to the conjugate transpose of the left eigenvector associated with $\lambda_i$ (see~\cite[Chapter 52]{TrefethenEmbree2005SPB}).

    By using the eigenvalue decomposition and \Cref{lem:convergenceBound}, we obtain
    \begin{align*}
        \frac{\twonorm{ \mathbf{r}_{l+m} }}{\twonorm{ \mathbf{r}_l }} & \leq
        \min_{q_{m} \in \mathcal{P}_{m}^1} \twonorm{\mathbf{V}q_{m}\left( \mathbf{\Lambda}\right) s_{J}^{l}(\mathbf{\Lambda})\mathbf{V}^{-1}} \\
        & \leq \min_{q_{m} \in \mathcal{P}_{m}^1} \twonorm{\sum_{\lambda_{i} \in \sigma(\mathbf{A})} q_{m}(\lambda_{i}) s_{J}^{l}(\lambda_{i}) \mathbf{v}_i \hat{\mathbf{v}}_{i}^{*}}.
    \end{align*}
    Because $s_{J}^{l}(\lambda_{j}) = 0$ for all $\lambda_{j} \in \Lambda_J$, the sum can be restricted to $\lambda_{i} \in \Lambda_J^c$, leading to
    \begin{align*}
        \frac{\twonorm{ \mathbf{r}_{l+m} }}{\twonorm{ \mathbf{r}_l }}
        & \leq
        \min_{q_{m} \in \mathcal{P}_{m}^1} \twonorm{\sum_{\lambda_i \in \Lambda_J^c} q_{m}(\lambda_i) s_{J}^{l}(\lambda_i) \mathbf{v}_i \hat{\mathbf{v}}_{i}^{*}}
        \\
        & \leq \min_{q_{m} \in \mathcal{P}_{m}^1} \sum_{\lambda_i \in \Lambda_J^c} \abs{q_{m}(\lambda_i)} \abs{s_{J}^{l}(\lambda_i)} \twonorm{\mathbf{v}_i}\twonorm{\hat{\mathbf{v}}_{i}^{*}}
        \\
        & \leq \min_{q_{m} \in \mathcal{P}_{m}^1} \sum_{\lambda_i \in \Lambda_J^c} \abs{q_{m}(\lambda_i)} \abs{s_{J}^{l}(\lambda_i)} \kappa(\lambda_i)
        \\
        & \leq \left(\sum_{\lambda_i\in \Lambda_J^c} \kappa(\lambda_i)\right) \max_{\lambda_i \in \Lambda_J^c} \abs{s_{J}^{l}(\lambda_i)} \min_{q_{m}\in \mathcal{P}_{m}^1} \max_{\lambda_i \in \Lambda_J^c} \abs{ q_{m}(\lambda_i) }.
    \end{align*}
\end{proof}
\paragraph{Interpretation}
If, at iteration $l$, the HR values in $N_J^{(l)}$ closely approach the eigenvalues in $\Lambda_J$, then the ratio $\max_{\lambda_i \in \Lambda_J^c} \abs{s_J^{l}(\lambda_i)}$ approaches 1. In this case, the convergence rate for subsequent iterations is effectively governed only by the remaining eigenvalues in $\Lambda_J^c$. This implies that GMRES behaves as though the eigenvalues in $\Lambda_J$ have been deflated—i.e., they no longer hinder convergence.

An important advantage of expressing the convergence rate using the ratio $\twonorm{ \mathbf{r}_{l+m} }/\twonorm{ \mathbf{r}_l }$ is that it allows for a stage-by-stage analysis of the convergence process. This formulation captures the evolution of the convergence rate between any two iterations and helps interpret the typically nonlinear behavior of GMRES convergence.

\begin{remark}[Link with Cao~\cite{Cao1997NCB}]
The convergence bound proposed in~\cite[Theorem 6]{Cao1997NCB} is given by:
\begin{equation*}
        \frac{\twonorm{ \mathbf{r}_{l+m} }}{\twonorm{ \mathbf{r}_l }}  
        \leq \kappa_{2}(\mathbf{V}) \max_{\lambda_i \neq \lambda_1} \abs{\frac{1-\frac{\lambda_i}{\lambda_1}}{1-\frac{\lambda_i}{\nu_1^{(l)}}}} \min_{q_{m}\in \mathcal{P}_{m}^1} \max_{\lambda_i \neq \lambda_1} \abs{ q_{m}(\lambda_i) },
    \end{equation*}
where $\lambda_1$ is the smallest eigenvalue of $\mathbf{A}$, and $\nu_1^{(l)}$ is an HR value at iteration $l$.
This bound focuses on the approximation of a single eigenvalue and includes the global condition number $\kappa_2(\mathbf{V})$, which reflects the influence of all eigenvectors and the overall non-normality of $\mathbf{A}$. In contrast, the bound in \Cref{thm:harmonicRitzValues} generalizes this idea by allowing for the approximation of a subset of eigenvalues and excludes the contribution of well-approximated ones. As a result, it provides a more localized and consistent characterization of GMRES convergence behavior.
\end{remark}

The bound in \Cref{thm:harmonicRitzValues} provides a residual estimate based on eigenvalue separation using the eigendecomposition of $\mathbf{A}$. However, when $\mathbf{A}$ is non-normal or nearly defective, its eigenvalues alone may not fully capture the convergence behavior of GMRES.\ In such cases, a more robust analysis can be achieved using the concept of the \emph{pseudospectrum}, which accounts for the sensitivity of the spectrum to perturbations and highlights the influence of non-normality.
To establish the second main result, which leverages pseudospectral properties, we first introduce a technical lemma involving spectral projectors. This result allows us to isolate the effect of different parts of the spectrum through contour integrals and is crucial in the derivation of the forthcoming bound.

\begin{lemma}[{Spectral projectors}]\label{lem:spectralProjectors}
    Assume that the hypotheses of \Cref{lem:convergenceBound} hold.
    Let $\Gamma_J$ (resp.~$\Gamma_J^c$) be a Jordan curve enclosing the eigenvalues in $\Lambda_J$ (resp.~$\Lambda_J^c := \sigma(\mathbf{A}) \setminus \Lambda_J$) of $\mathbf{A}$ and no other eigenvalues.
    The spectral projectors associated with $\Lambda_J$ and $\Lambda_J^c$ are defined by
    \begin{equation*}
        \mathbf{P}_{J} := \frac{-1}{2i\pi} \int_{\Gamma_{J}} {\left(\mathbf{A}-z\mathbf{I}\right)}^{-1}\mathrm{d}z \quad\text{and}\quad
        \mathbf{P}_{J}^{c} := \frac{-1}{2i\pi} \int_{\Gamma_{J}^c} {\left(\mathbf{A}-z\mathbf{I}\right)}^{-1}\mathrm{d}z,
    \end{equation*}
    respectively.
    Then, at iteration $l$ we have
    \begin{equation*}
        s_{J}^{l}(\mathbf{A}) = \mathbf{P}_{J}^{c} s_{J}^{l}(\mathbf{A}).
    \end{equation*}
\end{lemma}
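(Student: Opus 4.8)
The plan is to reduce the statement to a computation with the rank-one spectral projectors afforded by \cref{hyp:Adiag}, exploiting that the only role of the contour integrals is to single out the eigenvalues enclosed by each curve. Writing $\mathbf{A} = \mathbf{V}\mathbf{D}\mathbf{V}^{-1}$ as in the proof of \cref{thm:harmonicRitzValues}, every rational function of $\mathbf{A}$ whose poles avoid $\sigma(\mathbf{A})$ acts diagonally, i.e. $s_J^l(\mathbf{A}) = \sum_{\lambda_k \in \sigma(\mathbf{A})} s_J^l(\lambda_k)\,\mathbf{v}_k\hat{\mathbf{v}}_k^*$, and similarly the two projectors should collapse to sums of the elementary projectors $\mathbf{v}_i\hat{\mathbf{v}}_i^*$. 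Since $\Gamma_J$ and $\Gamma_J^c$ jointly enclose all of $\sigma(\mathbf{A})$ and nothing else, I expect the resolution of identity $\mathbf{P}_J + \mathbf{P}_J^c = \mathbf{I}$ to hold, so that the claim $s_J^l(\mathbf{A}) = \mathbf{P}_J^c s_J^l(\mathbf{A})$ is equivalent to $\mathbf{P}_J\, s_J^l(\mathbf{A}) = \mathbf{0}$.

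First I would make the projector representation precise. Inserting the partial-fraction form of the resolvent, $(\mathbf{A}-z\mathbf{I})^{-1} = \sum_{i} (\lambda_i - z)^{-1}\,\mathbf{v}_i\hat{\mathbf{v}}_i^*$, into the contour integral defining $\mathbf{P}_J$ and applying the residue theorem term by term shows that the integral returns the coefficient $1$ exactly for those $\lambda_i$ encircled by $\Gamma_J$ and $0$ otherwise; hence $\mathbf{P}_J = \sum_{\lambda_i \in \Lambda_J}\mathbf{v}_i\hat{\mathbf{v}}_i^*$, and likewise $\mathbf{P}_J^c = \sum_{\lambda_i \in \Lambda_J^c}\mathbf{v}_i\hat{\mathbf{v}}_i^*$. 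This also confirms $\mathbf{P}_J + \mathbf{P}_J^c = \mathbf{I}$. Next I would compute the composition: using the biorthogonality $\hat{\mathbf{v}}_i^*\mathbf{v}_k = \delta_{ik}$, the product telescopes to $\mathbf{P}_J\, s_J^l(\mathbf{A}) = \sum_{\lambda_i \in \Lambda_J} s_J^l(\lambda_i)\,\mathbf{v}_i\hat{\mathbf{v}}_i^*$. By the definition of $s_J^l$ in \eqref{eq:sJl}, its numerator $\prod_{\lambda_j \in \Lambda_J}(1 - z/\lambda_j)$ vanishes at each $\lambda_i \in \Lambda_J$, so $s_J^l(\lambda_i) = 0$ for every term in this sum. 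Therefore $\mathbf{P}_J\, s_J^l(\mathbf{A}) = \mathbf{0}$, and combining with the resolution of identity yields $s_J^l(\mathbf{A}) = (\mathbf{P}_J + \mathbf{P}_J^c)\,s_J^l(\mathbf{A}) = \mathbf{P}_J^c\, s_J^l(\mathbf{A})$, as claimed.

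The hard part will not be the algebra but the bookkeeping needed to justify the functional calculus rigorously: one must check that $s_J^l(\mathbf{A})$ is well defined, i.e. that no HR value in $N_J^{(l)}$ coincides with an eigenvalue of $\mathbf{A}$ so that each factor $\mathbf{I} - \mathbf{A}/\nu_j^{(l)}$ is invertible (consistent with how $s_J^l(\mathbf{A})$ already appears in \cref{lem:convergenceBound}), and that the Jordan curves $\Gamma_J, \Gamma_J^c$ can indeed be chosen to isolate the two groups of eigenvalues with the correct orientation so the residue evaluation produces the stated signs. Once those points are settled, the vanishing of $s_J^l$ on $\Lambda_J$ makes the conclusion immediate. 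I would also note that phrasing the projectors through contour integrals, rather than through the eigendecomposition directly, is deliberate: it is what later permits resolvent- and pseudospectrum-based estimates of $\mathbf{P}_J^c\, s_J^l(\mathbf{A})$ in the non-normal regime.
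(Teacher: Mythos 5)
Your proposal is correct: you establish exactly the identity the paper needs, and your overall skeleton coincides with the paper's proof (write $s_J^l(\mathbf{A}) = (\mathbf{P}_J + \mathbf{P}_J^c)\,s_J^l(\mathbf{A})$ via the resolution of identity, then annihilate the $\mathbf{P}_J$ term). Where you differ is in \emph{how} the key step $\mathbf{P}_J\, s_J^l(\mathbf{A}) = \mathbf{0}$ is justified. The paper stays at the operator level: it cites the standard spectral-projector properties (idempotence, completeness, orthogonality, commutativity with $\mathbf{A}$, and the kernel characterization $\ker\bigl[\prod_{\lambda_j \in \Lambda_J}(\mathbf{I} - \mathbf{A}/\lambda_j)\bigr] = \operatorname{ran}(\mathbf{P}_J)$ from Saad), and concludes that the numerator of $s_J^l(\mathbf{A})$ kills $\operatorname{ran}(\mathbf{P}_J)$, with commutativity carrying this through the HR-value factors. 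You instead make everything concrete under \cref{hyp:Adiag}: you expand the resolvent in partial fractions, evaluate the contour integrals by the residue theorem to get $\mathbf{P}_J = \sum_{\lambda_i \in \Lambda_J}\mathbf{v}_i\hat{\mathbf{v}}_i^*$ and $\mathbf{P}_J^c = \sum_{\lambda_i \in \Lambda_J^c}\mathbf{v}_i\hat{\mathbf{v}}_i^*$, and then use biorthogonality plus the scalar vanishing $s_J^l(\lambda_i)=0$ on $\Lambda_J$. Your route is more elementary and self-contained (no external lemma needed, and it doubles as a proof of completeness and orthogonality rather than assuming them), at the price of leaning entirely on diagonalizability and on the explicit residue bookkeeping and orientation conventions you flag; the paper's route is shorter, keeps the argument at the level of projector algebra, and—as you correctly observe—preserves the contour-integral representation of $\mathbf{P}_J^c$ in a form that \cref{thm:harmonicRitzValues2} then exploits directly for the resolvent bound. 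Your caveats (HR values must avoid $\sigma(\mathbf{A})$ so that $s_J^l(\mathbf{A})$ is well defined, and the curves must isolate the two eigenvalue groups) are legitimate and are indeed implicit, not addressed, in the paper's own proof.
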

\begin{proof}
    We begin by recalling standard properties of spectral projectors (see~\cite[Theorem 3.3, Lemma 3.1]{Saad2011NML}):
    \begin{itemize}
        \item[(i)] $\mathbf{P}_{J}^2 = \mathbf{P}_{J}$ and $\mathbf{P}_{J}^{c\,2} = \mathbf{P}_{J}^c$ (idempotence),
        \item[(ii)] $\mathbf{P}_{J} + \mathbf{P}_{J}^c = \mathbf{I}$ (completeness),
        \item[(iii)] $\mathbf{P}_{J} \mathbf{P}_{J}^c = \mathbf{P}_{J}^c \mathbf{P}_{J} = 0$ (orthogonality),
        \item[(iv)] $\mathbf{P}_{J} \mathbf{A} = \mathbf{A} \mathbf{P}_{J}$ and $\mathbf{P}_{J}^c \mathbf{A} = \mathbf{A} \mathbf{P}_{J}^c$ (commutativity with $\mathbf{A}$),
        \item[(v)] $\ker \left[ \prod_{\lambda_j \in \Lambda_J} \left( \mathbf{I} - \mathbf{A}/\lambda_j \right) \right] = \operatorname{ran}(\mathbf{P}_{J})$. 
    \end{itemize}
Let \( q(\mathbf{A}) \) be any matrix polynomial. Using property (ii), we can write:
    \[
        q(\mathbf{A}) = (\mathbf{P}_{J} + \mathbf{P}_{J}^c)\, q(\mathbf{A}) = \mathbf{P}_{J} q(\mathbf{A}) + \mathbf{P}_{J}^c q(\mathbf{A}).
    \]
    Applying this decomposition to \( s_J^l(\mathbf{A}) \), we obtain:
    \begin{align*}
        s_J^l(\mathbf{A}) 
        &= \mathbf{P}_{J}^c s_J^l(\mathbf{A}) + \mathbf{P}_{J} s_J^l(\mathbf{A}) \\
        &= \mathbf{P}_{J}^c
        \left[ \prod_{\lambda_j \in \Lambda_J} \left( \mathbf{I} - \frac{\mathbf{A}}{\lambda_j} \right) \right]
        \left[ \prod_{\nu_j^{(l)} \in N_J^{(l)}} {\left( \mathbf{I} - \frac{\mathbf{A}}{\nu_j^{(l)}} \right)}^{-1} \right] \\
        &\quad + \mathbf{P}_{J}
        \left[ \prod_{\lambda_j \in \Lambda_J} \left( \mathbf{I} - \frac{\mathbf{A}}{\lambda_j} \right) \right]
        \left[ \prod_{\nu_j^{(l)} \in N_J^{(l)}} {\left( \mathbf{I} - \frac{\mathbf{A}}{\nu_j^{(l)}} \right)}^{-1} \right].
    \end{align*}
 We now analyze the second term. The factor $
        \prod_{\lambda_j \in \Lambda_J} \left( \mathbf{I} - {\mathbf{A}}/{\lambda_j} \right)$
    acts as a zero operator on \( \operatorname{ran}(\mathbf{P}_{J}) \) due to the property (v) of spectral projectors. Since \( \mathbf{P}_{J} \mathbf{A} = \mathbf{A} \mathbf{P}_{J} \), all matrix functions of $\mathbf{A}$ also commute with $\mathbf{P}_{J}$. Therefore, the entire second term vanishes:
    \[
        \mathbf{P}_{J} s_J^l(\mathbf{A}) = \mathbf{0},
    \]
and the conclusion follows.
\end{proof}

\begin{theorem}[Second residual estimate]\label{thm:harmonicRitzValues2}
    Let the assumptions of \Cref{lem:convergenceBound} hold and $\Gamma_J$ (resp.~$\Gamma_J^c$) as in \Cref{lem:spectralProjectors}.
    Suppose there exists a constant $\varepsilon_J^c > 0$ such that
    \[
        \left\| \mathbf{A} - z\mathbf{I} \right\|_2^{-1} \leq \frac{1}{\varepsilon_J^c}, \quad \forall z \in \Gamma_J^c.
    \]
    Then, for iteration indices $l$ and $l+m$, the residuals satisfy:
    \[
        \frac{\left\| \mathbf{r}_{l+m} \right\|_2}{\left\| \mathbf{r}_l \right\|_2} \leq \frac{\mathcal{L}(\Gamma_J^c)}{2\pi \varepsilon_J^c} \cdot \max_{z \in \Gamma_J^c} |s_J^l(z)| \cdot \min_{q_m \in \mathcal{P}_m^1} \max_{z \in \Gamma_J^c} |q_m(z)|,
    \]
    where $\mathcal{L}(\Gamma_J^c)$ denotes the length of the contour $\Gamma_J^c$.
\end{theorem}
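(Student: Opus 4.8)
The plan is to start from the preliminary estimate of \cref{lem:convergenceBound}, rewrite the matrix function $s_J^l(\mathbf{A})$ through the spectral projector of \cref{lem:spectralProjectors}, and then replace the eigendecomposition argument of \cref{thm:harmonicRitzValues} by a Cauchy (Dunford) contour-integral representation along $\Gamma_J^c$. From \cref{lem:convergenceBound} it suffices to bound $\twonorm{q_m(\mathbf{A})\, s_J^l(\mathbf{A})}$ for a fixed $q_m \in \mathcal{P}_m^1$ and then minimize over $q_m$. Using \cref{lem:spectralProjectors}, namely $s_J^l(\mathbf{A}) = \mathbf{P}_J^c\, s_J^l(\mathbf{A})$, together with the fact that $\mathbf{P}_J^c$ commutes with every function of $\mathbf{A}$ (property (iv)), I would write $q_m(\mathbf{A})\, s_J^l(\mathbf{A}) = \mathbf{P}_J^c\, q_m(\mathbf{A})\, s_J^l(\mathbf{A})$.

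The key step is to represent this operator as a contour integral. Setting $g(z) := q_m(z)\, s_J^l(z)$, the poles of $g$ are exactly the HR values in $N_J^{(l)}$, which cluster near $\Lambda_J$ and therefore lie outside $\Gamma_J^c$; hence $g$ is holomorphic in a neighborhood of the region enclosed by $\Gamma_J^c$. The holomorphic functional calculus then yields the projector-weighted identity
\[
q_m(\mathbf{A})\, s_J^l(\mathbf{A}) = \mathbf{P}_J^c\, g(\mathbf{A}) = \frac{-1}{2i\pi} \int_{\Gamma_J^c} g(z)\, {(\mathbf{A}-z\mathbf{I})}^{-1}\, \mathrm{d}z .
\]
Taking the 2-norm, moving it inside the integral, and applying the ML-type estimate gives
\[
\twonorm{q_m(\mathbf{A})\, s_J^l(\mathbf{A})} \leq \frac{1}{2\pi} \int_{\Gamma_J^c} \abs{q_m(z)}\, \abs{s_J^l(z)}\, \twonorm{{(\mathbf{A}-z\mathbf{I})}^{-1}}\, \abs{\mathrm{d}z},
\]
and bounding the resolvent by the hypothesis $\twonorm{{(\mathbf{A}-z\mathbf{I})}^{-1}} \leq 1/\varepsilon_J^c$, each of the remaining factors by its maximum over $\Gamma_J^c$, and $\int_{\Gamma_J^c} \abs{\mathrm{d}z}$ by $\mathcal{L}(\Gamma_J^c)$ produces the factor $\tfrac{\mathcal{L}(\Gamma_J^c)}{2\pi \varepsilon_J^c} \max_{z \in \Gamma_J^c} \abs{s_J^l(z)} \max_{z \in \Gamma_J^c} \abs{q_m(z)}$.

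Finally, since the prefactor and $\max_{z \in \Gamma_J^c} \abs{s_J^l(z)}$ are independent of $q_m$, taking the minimum over $q_m \in \mathcal{P}_m^1$ gives the claimed bound. I expect the main obstacle to be the justification of the contour-integral identity: one must check that $s_J^l$ (equivalently $g$) is analytic inside and on $\Gamma_J^c$, i.e.\ that none of the HR values---the poles of $s_J^l$---lie inside $\Gamma_J^c$, and then invoke the standard resolvent representation of $f(\mathbf{A})\mathbf{P}_J^c$ on the invariant subspace $\operatorname{ran}(\mathbf{P}_J^c)$ associated with $\Lambda_J^c$. The remaining steps---the ML inequality for contour integrals and pulling the $q_m$-independent factors out of the minimization---are routine.
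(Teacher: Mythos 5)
Your proposal is correct and takes essentially the same route as the paper: start from \cref{lem:convergenceBound}, insert the projector via \cref{lem:spectralProjectors}, represent $\mathbf{P}_J^c\, q_m(\mathbf{A})\, s_J^l(\mathbf{A})$ as a contour integral over $\Gamma_J^c$, apply the ML estimate together with the resolvent bound, and finally minimize over $q_m$. You are in fact more explicit than the paper on the one delicate point---that $q_m s_J^l$ must be holomorphic inside and on $\Gamma_J^c$, i.e.\ that the HR-value poles of $s_J^l$ lie outside the contour---an assumption the paper's ``integral representation'' step uses silently.
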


\begin{proof}
    From \Cref{lem:convergenceBound} and \Cref{lem:spectralProjectors}, we have:
    \[
        \frac{\left\| \mathbf{r}_{l+m} \right\|_2}{\left\| \mathbf{r}_l \right\|_2} \leq \min_{q_m \in \mathcal{P}_m^1} \left\| \mathbf{P}_J^c \, q_m(\mathbf{A}) \, s_J^l(\mathbf{A}) \right\|_2.
    \]
    Using the integral representation of the spectral projector $\mathbf{P}_J^c$, we obtain:
    \[
        \frac{\left\| \mathbf{r}_{l+m} \right\|_2}{\left\| \mathbf{r}_l \right\|_2} \leq \min_{q_m \in \mathcal{P}_m^1} \frac{1}{2\pi} \int_{\Gamma_J^c} |q_m(z)| \cdot |s_J^l(z)| \cdot \left\| {(\mathbf{A} - z\mathbf{I})}^{-1} \right\|_2 \, \mathrm{d}z.
    \]
    Bounding the integrand by its maximum on $\Gamma_J^c$, we find:
    \[
        \frac{\left\| \mathbf{r}_{l+m} \right\|_2}{\left\| \mathbf{r}_l \right\|_2}
        \leq \frac{\mathcal{L}(\Gamma_J^c)}{2\pi} \cdot \max_{z \in \Gamma_J^c} |s_J^l(z)| \cdot \min_{q_m \in \mathcal{P}_m^1} \max_{z \in \Gamma_J^c} |q_m(z)| \cdot \max_{z \in \Gamma_J^c} \left\| {(\mathbf{A} - z\mathbf{I})}^{-1} \right\|_2.
    \]
    Applying the resolvent bound yields the desired result:
    \[
        \frac{\left\| \mathbf{r}_{l+m} \right\|_2}{\left\| \mathbf{r}_l \right\|_2} \leq \frac{\mathcal{L}(\Gamma_J^c)}{2\pi \varepsilon_J^c} \cdot \max_{z \in \Gamma_J^c} |s_J^l(z)| \cdot \min_{q_m \in \mathcal{P}_m^1} \max_{z \in \Gamma_J^c} |q_m(z)|.
    \]
\end{proof}

\paragraph{Interpretation}
In \Cref{thm:harmonicRitzValues2}, the convergence bound involves a maximum over the contour $\Gamma_J^c$, which is a continuous path in the complex plane. The choice of this contour is restricted by the requirement that the resolvent norm $\|{(\mathbf{A} - z\mathbf{I})}^{-1}\|_2$ remains bounded above by $1/\eps_J^c$ for all $z \in \Gamma_J^c$. In cases where $\mathbf{A}$ is highly non-normal, especially when eigenvalues are clustered around a non-normal eigenvalue, ensuring this bound may force $\eps_J^c$ to be very small.

As a result, even if an HR value is close to such an eigenvalue, its influence can still be felt implicitly in the convergence bound via the smallness of $\eps_J^c$. This highlights the key difference between this \emph{pseudospectrum-based} bound (\Cref{thm:harmonicRitzValues2}) and the earlier \emph{spectrum-based} bound (\Cref{thm:harmonicRitzValues}): the latter is completely unaffected by the eigenvalues that are already well-approximated by HR values, whereas the former can still be influenced by them indirectly through the geometry and norm behavior on the contour.

\section{Helmholtz problem and acceleration techniques}\label{sec:helmholtz}

In this section, we introduce the Helmholtz boundary value problem, the linear system arising from its finite element discretization, and techniques to accelerate the iterative solution process. We also discuss the difficulties caused by resonances and quasi-resonances occurring near the wavenumber, which can significantly impair convergence.

\subsection{Boundary value problem}\label{subsec:HelmholtzProblem}

Let $\Omega \subset \mathbb{R}^d$ be a bounded domain with Lipschitz boundary $\Gamma$, which is partitioned into three disjoint parts: $\Gamma = \Gamma_\mathrm{D} \cup \Gamma_\mathrm{N} \cup \Gamma_\mathrm{R}$, corresponding to Dirichlet, Neumann, and Robin boundary segments, respectively.
The Helmholtz problem is defined as follows:
\begin{align}\label{eq:HelmholtzPb}
    \left\{
    \begin{aligned}
        -\Delta u - k^2 u                     &= f   && \text{in } \Omega,     \\
        u                                     &= g_\mathrm{D} && \text{on } \Gamma_\mathrm{D}, \\
        \partial_{\boldsymbol{n}} u           &= g_\mathrm{N} && \text{on } \Gamma_\mathrm{N}, \\
        \partial_{\boldsymbol{n}} u + i k u &= 0   && \text{on } \Gamma_\mathrm{R}, \\
    \end{aligned}
    \right.
\end{align}
with source term $f$ and boundary data $g_\mathrm{D}$ and $g_\mathrm{N}$.

A variational formulation is obtained by multiplying the Helmholtz equation by a test function $v \in V_0 := \{v \in H^1(\Omega) \;|\; v = 0 \text{ on } \Gamma_\mathrm{D}\}$ and integrating by parts.
The solution space is $V := \{v \in H^1(\Omega) \;|\; v = g_\mathrm{D} \text{ on } \Gamma_\mathrm{D}\}$.
Taking into account the boundary conditions yields the weak form:
\begin{equation*}
    \left|\ 
    \begin{aligned}
        \text{Find \( u \in V\) such that $a(u, v) = \ell(v)$ for all $v \in V_0$.}
    \end{aligned}
    \right.
\end{equation*}
Here, the sesquilinear form $a$ and linear form $\ell$ are defined by
\begin{equation*}
    a(u,v) = \int_{\Omega} \left( \nabla u \cdot \nabla \bar{v} - k^2 u \bar{v} \right) \mathrm{d}\Omega + i k \int_{\Gamma_\mathrm{R}} u \bar{v} \, \mathrm{d}\Gamma,
    \quad
    \ell(v) = \int_{\Omega} f \bar{v} \, \mathrm{d}\Omega + \int_{\Gamma_\mathrm{N}} g_\mathrm{N} \bar{v} \, \mathrm{d}\Gamma.
\end{equation*}

We also consider problems defined in unbounded domains.
In that case, the Helmholtz equation is supplemented with the Sommerfeld radiation condition at infinity~\cite{Sommerfeld1912DGF,Schot1992EYS}:
\begin{equation*}
     \lim_{r \rightarrow \infty} r \left( \frac{\partial u}{\partial r} - iku\right) = 0, \quad r = \sqrt{x^2+y^2},
\end{equation*}
and the solutions are sought in $H^1_{\text{loc}}(\Omega)$, the space of functions locally in $H^1(\Omega)$.
In practice, numerical simulations are carried out on truncated computational domains equipped with artificial boundary conditions—such as absorbing boundary conditions~\cite{EngquistMajda1977ABC,ModaveGeuzaineEtAl2020CTH} or perfectly matched layers (PMLs)~\cite{BermudezHervellaNietoEtAl2004EBP, BermudezHervellaNietoEtAl2007OPM, BeriotModave2020APM, BecacheDhiaEtAl2006PML,Berenger1994PML, MarchnerBeriotEtAl2021SPM, TurkelYefet1998APB}.
In this work, we employ PMLs (see \Cref{sec:scatteringProblems}).

The Helmholtz problems may be ill-posed for certain wavenumbers $k$.
For the bounded case (Problem~\eqref{eq:HelmholtzPb}), these values, called \emph{resonances}, appear if $\Gamma_{\mathrm{R}} = \emptyset$. They are real and correspond to the eigenvalues of the associated Laplace problem on $\Omega$.
Similar phenomena occur in unbounded cases involving configurations such as open cavities or inhomogeneous media~\cite{Moitier2019EME}.
These resonances arise from nonphysical complex values of $k$ (called \emph{scattering resonances}) with small negative imaginary parts.
These values imply the existence of real values of $k$, called \emph{quasi-resonances}, which are associated with localized quasi-modes, and for which the Helmholtz problems are well-posed~\cite{Stefanov2000RRA}. Quasi-modes are defined as follows:

\begin{definition}[{Quasi-modes~\cite[Definition 1.1]{MarchandGalkowskiEtAl2022AGH}}]\label{def:quasimodes}
    A family of quasi-modes with quality $\epsilon(k)$ is a sequence
    \[
        {\left\{\left( u_j, k_j \right)\right\}}_{j=1}^\infty \subset H^1_{\text{loc}}(\Omega) \times \mathbb{R},
    \]
    such that $k_j \to \infty$ as $j \to \infty$, and there exists a compact set $K \subset \Omega$ such that, for all $j$,
    \[
        \textnormal{supp}(u_j) \subset K, \quad \norm{-\Delta u_j - k_j^2 u_j}{L^2(\Omega)} \leq \epsilon(k_j), \quad \text{and} \quad \norm{u_j}{L^2(\Omega)} = 1.
    \]
\end{definition}
Depending on the problem, the decay of \(\epsilon(k_j)\) can be polynomial (known as \emph{weak-trapping}) or exponential (\emph{strong-trapping})~\cite{ChandlerWildeSpenceEtAl2020HFB}.

As we shall see, when the wavenumber $k$ approaches a resonance or quasi-resonance, the linear system resulting from the discretization of the problem becomes significantly harder to solve.

\subsection{Discretization}\label{subsec:numericalDiscretization}
To numerically solve the Helmholtz problem introduced above, we apply a polynomial finite element discretization on a triangular conforming mesh of the domain.
The average element size is $h$. We use shape functions based on integrated Legendre polynomial~\cite{SzaboBabuska2021FEA,PetersenDreyerEtAl2006AFS}.
The discrete solution $u_h$ is sought in a finite-dimensional subspace $V_h \subset V$ and satisfies the variational formulation
\begin{align*}
    a(u_h, v_h) = \ell(v_h), \qquad \forall v_h \in V_{h,0} := V_h \cap V_0.
\end{align*}
The solution $u_h$ can be expanded as
\begin{align*}
    u_h(\boldsymbol{x}) = \sum_{j=1}^{N} u_j \phi_j(\boldsymbol{x}) + \sum_{j=N+1}^{N+N_\mathrm{D}} \hat{u}_{j-N} \phi_j(\boldsymbol{x}),
\end{align*}
where ${\{\phi_j\}}_{j=1}^{N+N_\mathrm{D}}$ is a basis of $V_h$, with ${\{\phi_j\}}_{j=1}^N$ forming a basis for $V_{h,0}$.
The coefficients $\{u_j\}$ are unknowns, while $\{\hat{u}_j\}$ encode the Dirichlet data on $\Gamma_\mathrm{D}$.
Substituting this representation into the variational formulation and choosing the test functions from $V_{h,0}$ leads to the linear system
\begin{align}
    \label{eq:linearSystem2}
    \left|\
    \begin{aligned}
        \text{Find $\mathbf{u} \in \mathbb{C}^N$ such that } \mathbf{A}\mathbf{u} = \mathbf{b}, 
    \end{aligned}
    \right.
\end{align}
with system matrix and right-hand side defined by
\[
    A_{i,j} = a(\phi_j, \phi_i), \quad b_i = \ell(\phi_i) - \sum_{j=N+1}^{N+N_\mathrm{D}} a(\phi_j, \phi_i)\, \hat{u}_{j-N}, \quad \text{for } i,j = 1,\dots,N.
\]

The properties of the discrete system mirror those of the continuous problem.
The matrix $\mathbf{A}$ is \emph{non-singular}, since $k$ is assumed not to correspond to a resonance.
The sesquilinear form is typically not coercive. As a result, $\mathbf{A}$ is often highly \emph{indefinite}, and this indefiniteness increases with $k$.
The matrix $\mathbf{A}$ may also be \emph{non-normal}, meaning its eigenvectors are not mutually orthogonal.

These features make the Helmholtz linear system particularly challenging for iterative solvers.
At high frequencies or near (quasi-)resonances, these difficulties become more pronounced:~\cite{GalkowskiMarchandEtAl2021ETH} shows that quasi-modes give rise to small eigenvalues.
In the boundary element setting,~\cite{MarchandGalkowskiEtAl2022AGH} analyzes how GMRES convergence deteriorates near quasi-modes.
Here, with a finite element discretization, we likewise observe the emergence of small eigenvalues and corresponding GMRES stagnation.


\subsection{Acceleration techniques and preconditioning}\label{subsec:acceleration}

In this section, we describe two techniques to accelerate the convergence of GMRES for solving the discrete problem~\eqref{eq:linearSystem2}: the deflation technique and the CSL preconditioning technique.
We also assume that the Assumption\ \ref{hyp:Adiag} holds in this section.

\subsubsection*{Deflation}\label{subsec:deflation}

The principle of the deflation technique is to remove some well-chosen eigenvalues from the spectrum of $\mathbf{A}$. This is done by using projection operators defined with matrices $\mathbf{Y}$ and $\mathbf{Z}$ to be chosen.
The method is initially developped to symmetric problems~\cite{FrankVuik2001CDB,NabbenVuik2008CAV,TangNabbenEtAl2009CTL}, but the theoretical framework was later generalized to non-Hermitian problems~\cite{ErlanggaNabben2008DBP,GarciaRamosKehlEtAl2020PDM,SpillaneSzyld2024NCA}.
First, we provide a general overview of deflation. After that, we will discuss the choice of $\mathbf{Y}$ and $\mathbf{Z}$, and specify what will be used in~\Cref{sec:cavityProblem,sec:scatteringProblems}.

\begin{definition}{{(Deflation matrices)}}\label{def:def}
    Let $\mathbf{Y}, \mathbf{Z} \in \C^{N \times n_{\mathrm{def}}}$ be two full rank matrices such that $\ker(\mathbf{Y}^{*}) \cap \range{\mathbf{AZ}} = \{0\}$, and let
    \begin{equation*}
        \mathbf{E} := \mathbf{Y}^{*}\mathbf{A}\mathbf{Z}\in \C^{n_{\mathrm{def}}\times n_{\mathrm{def}}}
        \quad\text{and}\quad
        \mathbf{Q} := \mathbf{Z} \mathbf{E}^{-1} \mathbf{Y}^{*}.
    \end{equation*}
    The \emph{deflation matrices} are defined as
    \begin{equation*}
        \mathbf{P}_\mathrm{def} := \mathbf{I} - \mathbf{A} \mathbf{Q}
        \quad\text{and}\quad
        \mathbf{Q}_\mathrm{def} := \mathbf{I} - \mathbf{Q} \mathbf{A}.
    \end{equation*}
\end{definition}
\noindent
Matrix $\mathbf{E}$ is the restriction of $\mathbf{A}$ to the deflated subspace. It is non-singular and a priori easy to invert, since $n_{def}$ is assumed to be small.

The deflation matrices are projectors onto the complement of the deflated subspace and satisfy
$\mathbf{P}_\mathrm{def}^{2} = \mathbf{P}_\mathrm{def}$,
$\mathbf{Q}_\mathrm{def}^{2} = \mathbf{Q}_\mathrm{def}$ and
$\mathbf{P}_\mathrm{def} \mathbf{A} = \mathbf{A} \mathbf{Q}_\mathrm{def}$, see\ \cite[Proposition 3.1]{GarciaRamosKehlEtAl2020PDM}.
Applying $\mathbf{P}_\mathrm{def}$ or $\mathbf{Q}_\mathrm{def}$ to $\mathbf{A}$ removes the components associated with the deflated eigenvectors. The deflated problem is~\cite[Lemma 3.2]{GarciaRamosKehlEtAl2020PDM}
\begin{align}
    \label{eq:deflatedLinearSystem}
    \left|\ 
    \begin{aligned}
      \text{Find $\tilde{\mathbf{u}} \in \mathbb{C}^{N}$ such that $(\mathbf{P}_\mathrm{def} \mathbf{A}) \tilde{\mathbf{u}} = (\mathbf{P}_\mathrm{def} \mathbf{b})$},
    \end{aligned}
    \right.
\end{align}
and the solution of the original problem~\eqref{eq:linearSystem2} is obtained by using
\begin{equation*}
    \mathbf{u} = \mathbf{Q} \mathbf{b} + \mathbf{Q}_\mathrm{def} \tilde{\mathbf{u}}.
\end{equation*}
This can be shown by noting that $\mathbf{u}$ can be decomposed as
\begin{equation}
    \label{eq:decompositionDeflation}
    \mathbf{u} = \left( \mathbf{I} - \mathbf{Q}_\mathrm{def} \right)\mathbf{u} + \mathbf{Q}_\mathrm{def} \mathbf{u} =\mathbf{Q} \mathbf{b} + \mathbf{Q}_\mathrm{def} \mathbf{u}.
\end{equation}
Using the property $\mathbf{P}_\mathrm{def} \mathbf{A} = \mathbf{A} \mathbf{Q}_\mathrm{def}$,~\eqref{eq:deflatedLinearSystem} gives
\begin{equation*}
    \mathbf{A} \mathbf{Q}_\mathrm{def} \tilde{\mathbf{u}} = \mathbf{P}_\mathrm{def} \mathbf{A} \tilde{\mathbf{u}} = \mathbf{P}_\mathrm{def} \mathbf{b} = \mathbf{P}_\mathrm{def} \mathbf{A} \mathbf{u} = \mathbf{A} \mathbf{Q}_\mathrm{def} \mathbf{u}.
\end{equation*}
Since $\mathbf{A}$ is assumed to be non-singular by Assumption\ \ref{hyp:Adiag}, one has $\mathbf{Q}_\mathrm{def} \tilde{\mathbf{u}} = \mathbf{Q}_\mathrm{def} \mathbf{u}$.

Because $\mathbf{P}_\mathrm{def}$ projects out the deflated subspace, the matrix $\mathbf{P}_\mathrm{def} \mathbf{A}$ is singular, with zero as an eigenvalue of multiplicity $n_{\mathrm{def}}$.
But GMRES applied to this system still converges to a solution if the following condition is satisfied~\cite[Theorem 3.4]{GarciaRamosKehlEtAl2020PDM}:
\begin{equation}\label{eq:singularSystem}
    \ker(\mathbf{P}_\mathrm{def} \mathbf{A}) \cap \range{\mathbf{P}_\mathrm{def} \mathbf{A}} = \{0\}.
\end{equation}

The choice of $\mathbf{Y}$ and $\mathbf{Z}$ is not trivial; several options have been explored in the literature, such as exact eigenvectors, vectors derived from the multigrid method or domain decomposition, see e.g.~\cite{VuikSegalEtAl2002CVD,ErlanggaNabben2008DBP,MacLachlanOosterlee2008AMS,TangNabbenEtAl2009CTL,GarciaRamosKehlEtAl2020PDM}.
In the numerical experiments we consider in~\Cref{sec:cavityProblem,sec:scatteringProblems}, matrix $\mathbf{A}$ has small eigenvalues.
The associated eigenvectors can be approximated by modes or quasi-modes of the physical problems, and can be used empirically to deflate the small eigenvalues.
Thus, in what follows, the columns of $\mathbf{Z}$ will be the projections of these modes or approximated quasi-modes onto the finite-element space.
Based on this physical intuition, we take $\mathbf{Y} = \mathbf{Z}$.

In this case, the hypothesis of \Cref{def:def} holds and the condition~\eqref{eq:singularSystem} is automatically satisfied, so GMRES is guaranteed to converge to the solution.

Although the deflated system resembles a system preconditioned on the left by $\mathbf{P}_\mathrm{def}$, GMRES applied to~\eqref{eq:deflatedLinearSystem} still minimizes the residual norm of the original problem.
This follows from the decomposition~\eqref{eq:decompositionDeflation} and the definition of $\mathbf{P}_\mathrm{def}$; see~\cite[Theorem 3.1]{ErlanggaNabben2008DBP}.

Each GMRES iteration becomes more expensive due to the application of $\mathbf{P}_\mathrm{def}$.
Nevertheless, if $n_{\mathrm{def}}$ is small, computing $\mathbf{E}^{-1}$ is inexpensive.
In addition, if $\mathbf{Z}$ is a full matrix (as is the case if its columns are eigenvectors), noting that $\mathbf{P}_\mathrm{def} \mathbf{A} = \mathbf{A} - (\mathbf{A} \mathbf{Z}) (\mathbf{E}^{-1} \mathbf{Z}^{*} \mathbf{A})$, the $N\times n_{\mathrm{def}}$ matrix $\mathbf{A} \mathbf{Z}$ and the $n_{\mathrm{def}} \times N$ matrix $\mathbf{E}^{-1} \mathbf{Z}^{*} \mathbf{A}$ can be stored.
Then, applying $\mathbf{P}_\mathrm{def} \mathbf{A}$ requires only $4N n_{\mathrm{def}} - N - n_{\mathrm{def}}$ additional operations compared to applying $\mathbf{A}$.
In practice, $\mathbf{Z}$ can be sparse (as in~\Cref{sec:scatteringProblems}), and this reduces the computational cost.

\subsubsection*{Deflation and preconditioning}\label{subsec:deflationPreconditioning}

Having described deflation as a technique to accelerate GMRES by removing selected eigenvalues, we now turn to another complementary approach: preconditioning.
That approach aims to improve the spectral properties of the system by pre- or post-multiplying the matrix system by an approximation of its inverse.
We then investigate how deflation can be combined with preconditioning.

We consider the CSL preconditioner as a representative example, following~\cite{ErlanggaOosterleeEtAl2006NMB,ErlanggaVuikEtAl2006CMI,CoolsVanroose2013LFA,GanderGrahamEtAl2015AGH,RamosNabben2020TLS,RamosSeteEtAl2021PHE}.
The preconditioner is based on solving a modified Helmholtz problem with a complex-shifted wavenumber.
\begin{equation*}
    \int_{\Omega} \left( \nabla u \cdot \nabla \bar{v} - (k^2 + i \varepsilon) u \bar{v} \right) \, \mathrm{d}\Omega + ik\int_{\Gamma_\mathrm{R}} u\bar{v} \, \mathrm{d}\Gamma = \int_{\Omega} f \bar{v} \, \mathrm{d}\Omega + \int_{\Gamma_\mathrm{N}} g_\mathrm{N} \bar{v} \, \mathrm{d}\Gamma, \quad \forall v \in V, \ \varepsilon > 0.
\end{equation*}
As suggested in~\cite{GanderGrahamEtAl2015AGH}, we use $\varepsilon = k$. Discretizing this problem yields the matrix $\mathbf{A}_\varepsilon$, and the original system is then right-preconditioned as:
\begin{align}
    \label{eq:rightPreconditionedSystem}
    \left|\ 
    \begin{aligned}
        \text{Find $\mathbf{x} \in \mathbb{C}^{N}$ such that } (\mathbf{A} \mathbf{A}_\varepsilon^{-1}) \mathbf{x} = \mathbf{b},
    \end{aligned}
    \right.
\end{align}
with the solution recovered as $\mathbf{u} = \mathbf{A}_\varepsilon^{-1} \mathbf{x}$.

The CSL preconditioner aims to reduce iteration counts without introducing excessive computational cost.
Its assembly is inexpensive, since the mass and stiffness matrices are reused from the original Helmholtz discretization.
However, applying $\mathbf{A}_\varepsilon^{-1}$ can be as costly as solving the original system.
Multigrid methods are often employed to efficiently deal with this challenge, but they can be difficult to implement and tune effectively.
Here, we rather use an incomplete LU (ILU) factorization as an efficient approximation.
Each GMRES iteration is then more expensive due to the cost of applying this approximate inverse.


To combine the CSL pronconditioning and deflation techniques, we substitute the preconditioned formulation~\eqref{eq:rightPreconditionedSystem} into the deflated system~\eqref{eq:deflatedLinearSystem}.
This leads to the doubly modified system:
\begin{align}
    \label{eq:preconditionedDeflatedLinearSystem}
    \left|\ 
    \begin{aligned}
        \text{Find $\tilde{\mathbf{x}} \in \mathbb{C}^{N}$ such that } (\mathbf{P}_\mathrm{def} \mathbf{A} \mathbf{A}_\varepsilon^{-1}) \tilde{\mathbf{x}} = \mathbf{P}_\mathrm{def} \mathbf{b}.
    \end{aligned}
    \right.
\end{align}
The final solution is reconstructed using
\[
    \mathbf{u} = \mathbf{Q} \mathbf{b} + \mathbf{Q}_\mathrm{def} \mathbf{A}_\varepsilon^{-1} \tilde{\mathbf{x}}.
\]
If the conditions
\begin{equation*}
    \ker(\mathbf{Z}^{*}) \cap \range{\mathbf{AZ}} = \{0\} \quad \text{and} \quad \ker(\mathbf{Z}^{*}) \cap \range{\mathbf{A}_\eps^{-1}\mathbf{Z}} = \{0\},
\end{equation*}
hold, then GMRES applied to this system converges for all initial guesses~\cite[Theorem 3.4]{GarciaRamosKehlEtAl2020PDM}.
In practice, $\mathbf{A}_\varepsilon^{-1}$ is approximated by its ILU factorization.
Each GMRES iteration then involves the additional cost of deflation mentioned in \Cref{subsec:deflation} plus the cost of applying the preconditioner.

Alternatively, the deflation and preconditioning ideas can be combined additively, yielding the so-called \emph{additive coarse grid correction} method~\cite{BastianWittumEtAl1998AMM}.
This leads to the modified system:
\begin{align*}
    \left|\ 
    \begin{aligned}
        \text{Find $\mathbf{x} \in \mathbb{C}^{N}$ such that } \mathbf{A} \left( \mathbf{A}_\varepsilon^{-1} + \mathbf{Q} \right) \mathbf{x} = \mathbf{b},
    \end{aligned}
    \right.
\end{align*}
with the solution given by $\mathbf{u} = (\mathbf{A}_\varepsilon^{-1} + \mathbf{Q}) \mathbf{x}$.
This approach has the same additional cost per iteration of~\eqref{eq:preconditionedDeflatedLinearSystem}, but it does not require the initial computation of $\mathbf{A} \mathbf{Z}$ and $\mathbf{Z}^{*} \mathbf{A}$.
However, since the numerical results obtained with this method are nearly identical to those with deflation, and because deflation enjoys a more developed theoretical foundation, we do not pursue this approach further in this work.
\section{Numerical experiments: a cavity problem with resonances}\label{sec:cavityProblem}

In this section and the next, we present numerical experiments designed to illustrate the specific challenges of solving Helmholtz problems.
These benchmarks serve to validate the interpretation tools and acceleration techniques discussed previously.
The first set of experiments focuses on a simple cavity problem with a wavenumber chosen close to resonance, a setting known to lead to slow convergence.


\subsection*{Description of the benchmark}
We solve the Helmholtz equation on the unit square domain $\Omega = {(0,1)}^2$, subject to homogeneous Dirichlet boundary conditions on $\partial \Omega$ and a constant source term equal to one in the interior.
The problem admits a unique solution provided that the square of the wavenumber does not coincide with an eigenvalue of the Laplacian operator.

For this configuration, the Laplacian eigenvalues and eigenfunctions are given by
\[
    k_{n,m}^2 = \pi^2(n^2 + m^2), \quad u_{n,m}(x,y) = \sin(n\pi x)\sin(m\pi y), \quad n,m > 0.
\]
After discretization, the system matrix $\mathbf{A}$ is real, symmetric, and therefore normal.
Its eigenvalues are real, and it admits an orthonormal basis of eigenvectors.

\begin{figure}[tb]
    \centering
    \begin{subfigure}{0.24\textwidth}
        \subcaption{Mesh}
        \includegraphics[height=0.15\textheight]{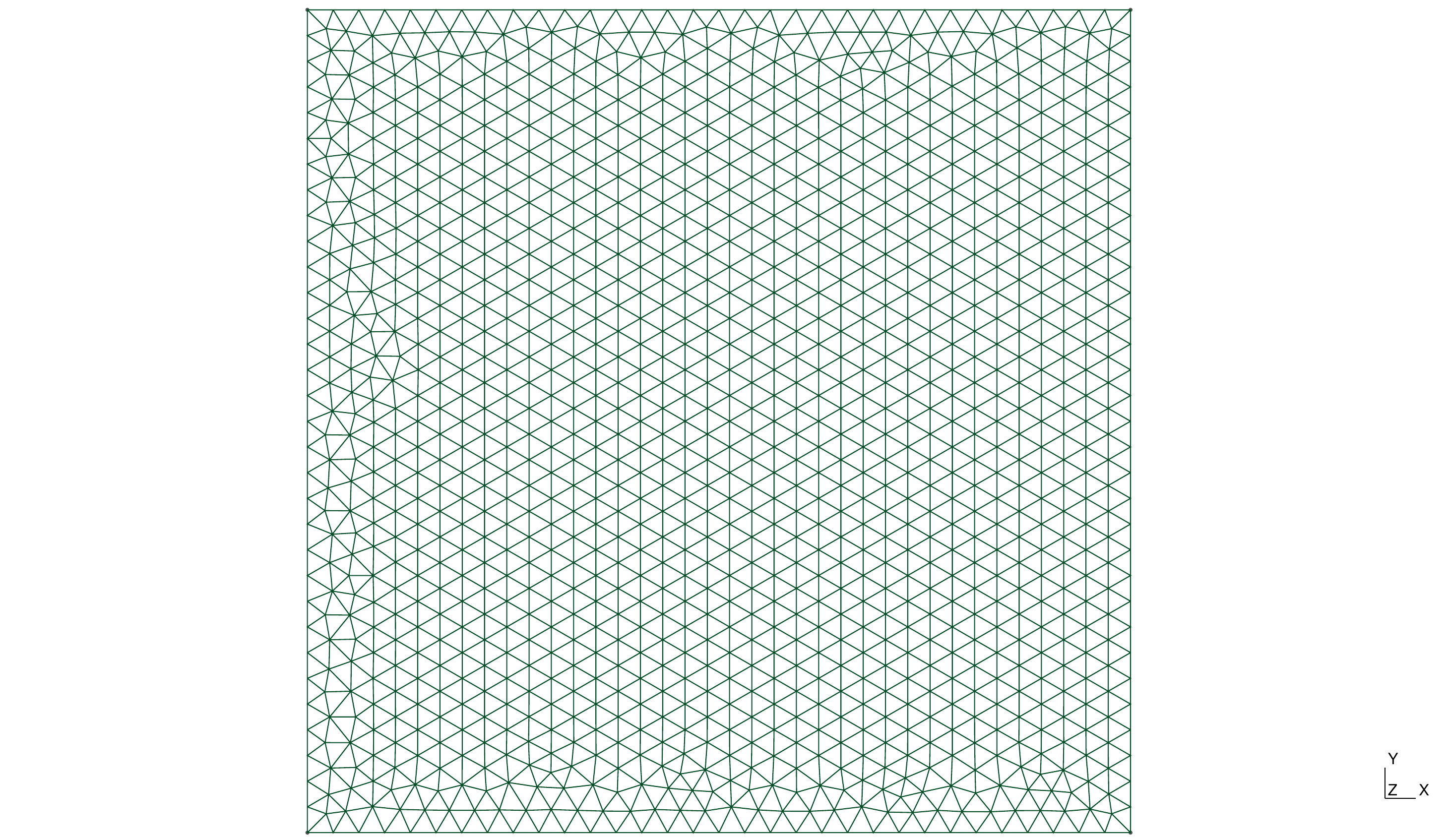}
    \end{subfigure}
    \begin{subfigure}{0.24\textwidth}\label{fig:mode3.3}
        \subcaption{Mode (3,3)}
        \includegraphics[height=0.15\textheight]{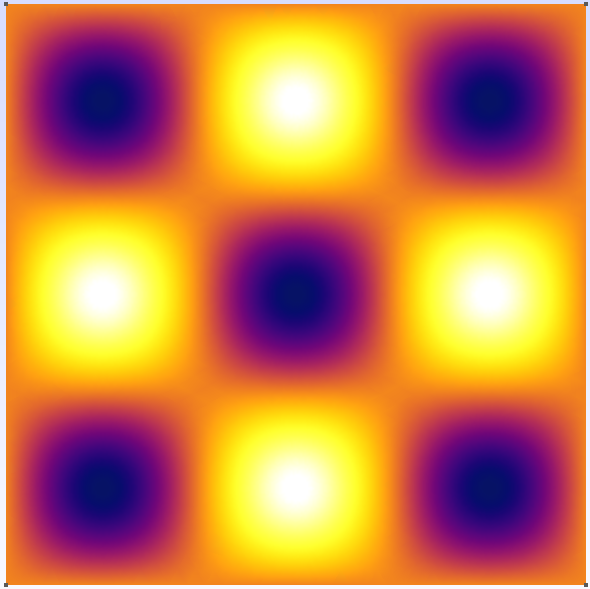}
    \end{subfigure}
    \begin{subfigure}{0.24\textwidth}
        \subcaption{Mode (2,1)}
        \includegraphics[height=0.15\textheight]{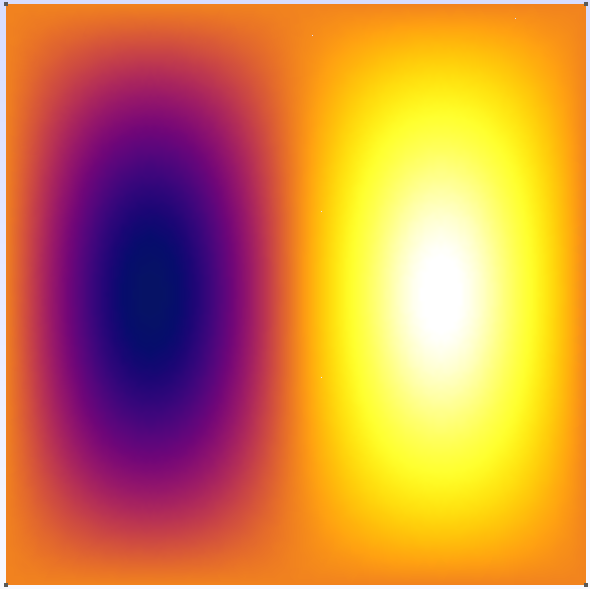}
    \end{subfigure}
    \begin{subfigure}{0.24\textwidth}
        \subcaption{Mode (1,4)}
        \includegraphics[height=0.15\textheight]{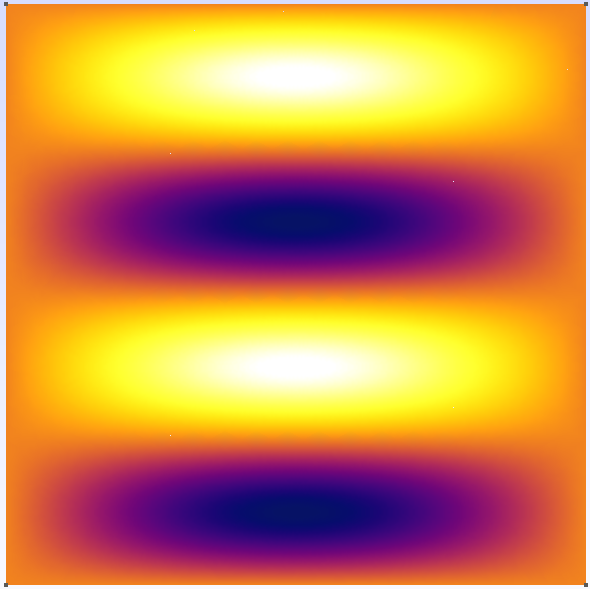}
    \end{subfigure}
    \caption{Cavity benchmark. Mesh for $h=1/32$, and three modes of the Laplacian used for deflation.}\label{fig:cavityPlot}
\end{figure}

All experiments are conducted using a Matlab implementation.
We use $P_2$ continuous finite elements on a uniform mesh with characteristic size $h = 1/32$, leading to approximately $N_\mathrm{dof} = 5{,}000$ degrees of freedom (see~\Cref{fig:cavityPlot}).
The mesh resolution is chosen so that the number of degrees of freedom per wavelength satisfies $D_\lambda = (\sqrt{N_\mathrm{dof}} - 1)/\lambda \geq 30$, where $\lambda = k / 2\pi$ denotes the wavelength.


\subsection*{Influence of the wavenumber on the relative error and the number of iterations}

\begin{figure}[!t]
    \centering
    \begin{subfigure}{.5\textwidth}
        \centering
        \caption{Relative $L^2$-error}
        \includegraphics{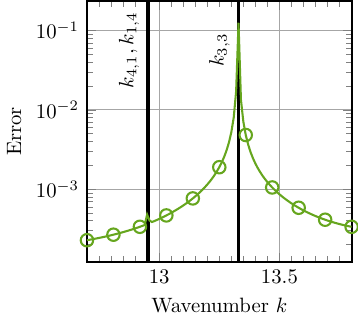}\label{fig:cavityError}
    \end{subfigure}\hfill
    \begin{subfigure}{.5\textwidth}
        \centering
        \caption{Number of GMRES iterations}
        \includegraphics{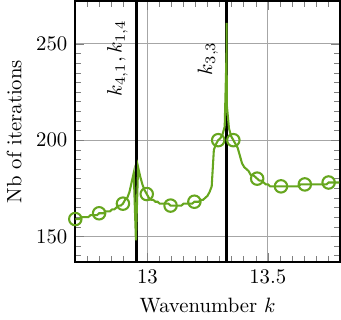}\label{fig:cavityNbit}
    \end{subfigure}
    \caption{Cavity benchmark. Relative $L^2$-error and number of GMRES iterations to reach the tolerance $10^{-6}$ on the relative residual as a function of $k$, without acceleration technique. The vertical lines correspond to resonance wavenumbers. There are 122 sample points distributed uniformly, and 3 points located around the peak $k_{3,3}$.}\label{fig:cavityErrorNbit}
\end{figure}

We now analyze the behavior of the solution and solver convergence as a function of the wavenumber $k$ for the cavity benchmark introduced above.

In \Cref{fig:cavityErrorNbit}, we plot the relative error and the number of GMRES iterations required to reduce the relative residual below $10^{-6}$, as functions of the wavenumber $k$.
The selected range of $k$ includes two resonance values: \(k_{3,3} = 3\sqrt{2}\pi \approx 13.33\) and \(k_{4,1} = k_{1,4} \approx 12.95\), corresponding to eigenmodes of the Laplacian.

A pronounced peak in the relative error appears near \(k_{3,3}\), while no such peak is observed near \(k_{4,1}\) (see \Cref{fig:cavityError}).
This difference is explained by the structure of the source term:  it excites only those modes for which both indices \(n\) and \(m\) are odd. 
As a result, modes such as \(u_{4,1}\) and \(u_{1,4}\), which are not excited, contribute negligibly to the solution, and no significant error is observed even when $k$ approaches their corresponding eigenvalues.
When $k$ coincides with the resonance frequency, the problem becomes singular and numerical resolution is meaningless.

By contrast, the number of GMRES iterations is sensitive to the proximity of \emph{any} eigenmode, regardless of whether it is excited by the source term.
This is evident in \Cref{fig:cavityNbit}, where two iteration peaks appear near both resonance values.
This suggests that the convergence of GMRES deteriorates when the wavenumber is close to an eigenvalue, a behavior we explore in more detail in the following discussion.


\subsection*{Influence of the wavenumber on the residual history}

\begin{figure}[!bt]
    \centering
    \centering
    \includegraphics[width=0.7\textwidth]{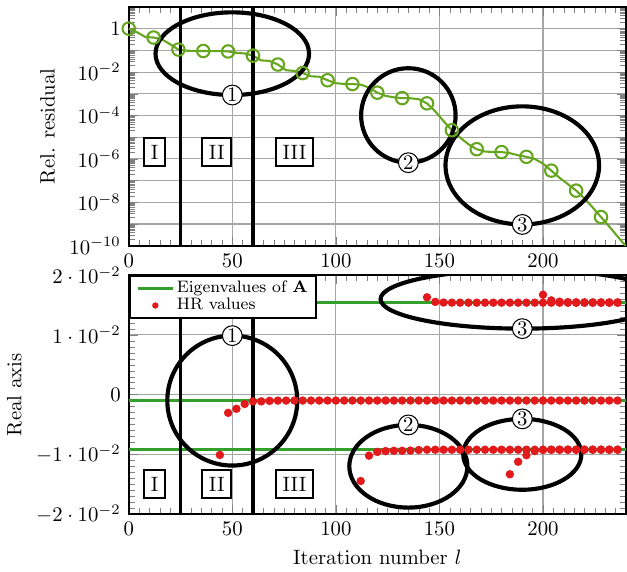}
    \caption{Cavity benchmark.
        Convergence analysis for $k=\hat{k}=3.01\sqrt{2}\pi$ near the resonance wavenumber $k_{3,3}$, without acceleration technique.
        The history of the relative residual over the iterations is shown at the top, and the trajectory of the HR values is shown at the bottom.
        The horizontal green lines correspond to the eigenvalues of $\mathbf{A}$.}\label{fig:cavityTrajRes}
\end{figure}

To better understand the convergence behavior near resonance, we now examine the residual history of GMRES without acceleration for the wavenumber $\hat{k} = 3.01\sqrt{2}\pi \approx 13.37$, which lies close to the resonance $k_{3,3}$.
At this frequency, one eigenvalue of the system matrix $\mathbf{A}$ lies close to the origin, and convergence issues are expected. As shown in \Cref{fig:cavityTrajRes} (top), the residual exhibits several distinct plateau phases followed by sharp drops (highlighted by black circles labeled 1, 2, and 3).
This pattern can be interpreted using the HR values and the convergence bound in \Cref{thm:harmonicRitzValues}. 

\Cref{fig:cavityTrajRes} (bottom) tracks the evolution of the HR values near zero throughout the GMRES iterations.
These values, which are real in this case, converge toward the eigenvalues of $\mathbf{A}$ (indicated by green lines), as expected. The first plateau in the residual history aligns with the moment when an HR value begins to approach the smallest eigenvalue $\lambda_1$ (circle 1 in both figures).
To analyze this behavior, we apply \Cref{thm:harmonicRitzValues} with $\Lambda_J^c = \sigma(\mathbf{A}) \setminus \{\lambda_1\}$ and examine the quantity
\begin{equation}
\label{eq:boundThm2.5}
\max_{\lambda_i \in \Lambda_J^c} \abs{s_J^{l}(\lambda_i)} 
= \max_{\lambda_i \in \Lambda_J^c} \abs{\frac{ \big(1 - \lambda_i/\lambda_1\big)}{ \big(1 - \lambda_i/\nu_1^{(l)}\big)}}
= \max_{\lambda_i \in \Lambda_J^c} \abs{\frac{\nu_1^{(l)}}{\lambda_1}} \abs{\frac{\lambda_1 - \lambda_i}{\nu_1^{(l)}- \lambda_i}}, 
\end{equation}
which is a factor in the upper bound of the residual.

We identify three distinct phases in the evolution of the GMRES residual, labeled $\mathrm{I}$, $\mathrm{II}$, and $\mathrm{III}$ in both plots of \Cref{fig:cavityTrajRes}.
Each phase reflects a different stage in the approximation of the smallest eigenvalue $\lambda_1$ by an HR value $\nu_1^{(l)}$, and corresponds to a different regime in the theoretical bound from \Cref{thm:harmonicRitzValues}.

\begin{itemize}
\setlength{\itemsep}{0pt}
\setlength{\parskip}{0pt}
\setlength{\parsep}{0pt}
\item \textbf{Phase I:\ HR values are far from $\lambda_1$}.  
At this stage, $\nu_1^{(l)}$ is not yet close to $\lambda_1$, and both terms in \Cref{eq:boundThm2.5} are large. The bound from \Cref{thm:harmonicRitzValues} is then loose, and cannot provide an accurate description of the convergence rate.

\item \textbf{Phase II:\ HR value approaches $\lambda_1$}.  
As the iteration progresses, one HR value $\nu_1^{(l)}$ begins to converge towards $\lambda_1$. However, the term $\left|\nu_1^{(l)} / \lambda_1\right|$ remains large, and the convergence bound still does not fully capture the residual behavior. This corresponds to the plateau in the residual curve, where the influence of $\lambda_1$ is strong but not yet neutralized.

\item \textbf{Phase III:\ HR value accurately captures $\lambda_1$}.  
Once $\nu_1^{(l)} \approx \lambda_1$, we have $\left| s_J^l(\lambda_i) \right| \approx 1$ for all $\lambda_i \in \Lambda_J^c$. The impact of $\lambda_1$ is effectively removed from the bound, and the convergence behavior improves significantly. The residual drops, and the theoretical prediction becomes tight.

\end{itemize}

This analysis reveals a direct link between the evolution of HR values and the shape of the GMRES residual curve.
Each plateau reflects the presence of a small eigenvalue not yet well captured by the Krylov subspace.
Once a small eigenvalue is well approximated, its effect is factored out of the residual bound, and the solver progresses more efficiently. The same mechanism explains the subsequent plateaus and drops: each corresponds to another small eigenvalue of $\mathbf{A}$ being successively captured by HR values.
The more small eigenvalues are well approximated, the smaller the quantity in the convergence bound of \Cref{thm:harmonicRitzValues}, and the more precise the theoretical prediction becomes.

In summary, GMRES convergence slows down when a small eigenvalue of $\mathbf{A}$ is in the process of being resolved by the Krylov subspace.
Once the associated HR value converges, the eigenvalue’s influence fades, and the residual decays more rapidly, as if the eigenvalue were no longer present.


\subsection*{Influence of the deflation}

Having established that the plateaus in the residual history correspond to small eigenvalues of $\mathbf{A}$, we now investigate the effect of deflating the associated eigenvectors on GMRES convergence.
Recall that the eigenvectors of $\mathbf{A}$ approximate those of the Laplace operator with Dirichlet boundary conditions.

We begin by deflating the eigenvector associated with the smallest eigenvalue of $\mathbf{A}$, corresponding to the mode $u_{3,3}$, illustrated in \Cref{fig:cavityPlot}.
In preliminary tests (not shown), we compared two approaches: deflating the discrete eigenvector of $\mathbf{A}$ directly, and deflating the finite element projection of the continuous mode $u_{3,3}$.
Both yielded nearly identical results on fine meshes.
In the remainder of the analysis, we focus on the deflation of projected modes $u_{m,n}$.

\begin{figure}[p]
    \centering
    \begin{subfigure}{\textwidth}
        \centering
        \subcaption{Number of GMRES iterations}
        \includegraphics[height=0.27\textheight]{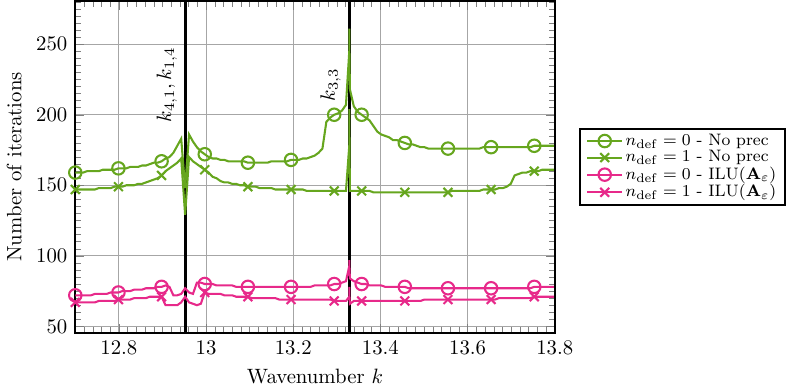}\label{fig:cavityNbIt}
    \end{subfigure}
    \\
    \begin{subfigure}{\textwidth}
        \centering
        \subcaption{Convergence history without preconditioning}
        \includegraphics[height=0.27\textheight]{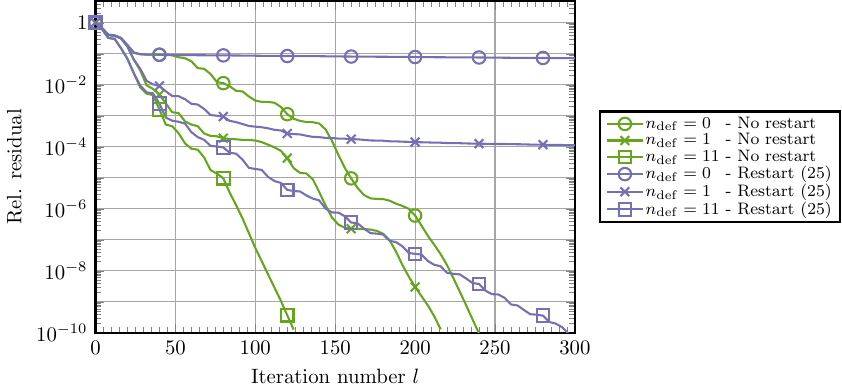}\label{fig:cavityResRestart}
    \end{subfigure} \\
    \begin{subfigure}{\textwidth}
        \centering
        \subcaption{Convergence history with preconditioning $\mathrm{ILU}(\mathbf{A}_\varepsilon)$}
        \includegraphics[height=0.27\textheight]{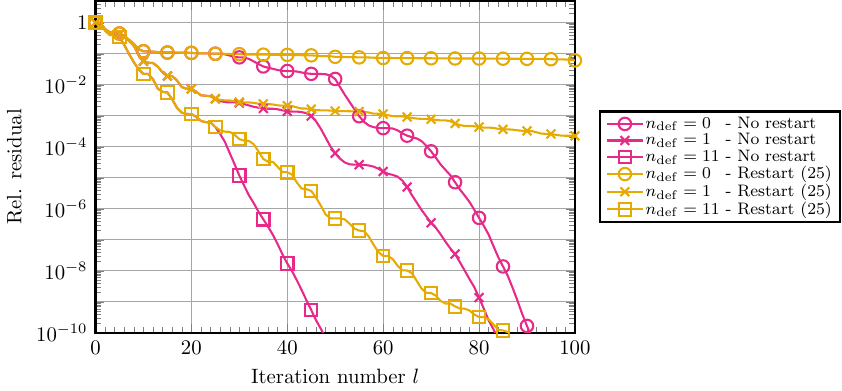}\label{fig:cavityResPrecRestart}
    \end{subfigure}
    \caption{Cavity benchmark. Number of GMRES iteration to reach the relative residual $10^{-6}$, and residual history for $k=\hat{k}$ in different configurations: with/without deflation, with/without preconditioning, with/without restart. $\mathrm{ILU}(\mathbf{A}_\varepsilon)$ refers to the preconditioning by an ILU factorization of the CSL.
    $n_{\text{def}}$ is the number of deflated eigenvalues. $n_{\text{def}}=1$ means deflation of the eigenvalue closest to zero; $n_{\text{def}}=11$ is deflation of all negative eigenvalues.
    Figure 4(a): There are 122 sample points distributed uniformly, and 3 points located around the peak $k_{3,3}$.}
\end{figure}
\Cref{fig:cavityNbIt} shows the effect of deflating $u_{3,3}$ on the number of GMRES iterations as a function of the wavenumber $k$.
Comparing the baseline case without deflation (green $\circ$) and the deflated case (green $\times$), we see that the peak near $k_{3,3}$ disappears when $u_{3,3}$ is deflated, while the peak near $k_{4,1} = k_{1,4}$ remains unaffected.
Note that when $k$ coincides with $k_{3,3}$, there is still a thin peak in the deflated case. The physical problem is ill-posed at this point and numerical resolution is meaningless.
Notably, the iteration count is reduced across the full range of $k$, even for values not directly associated with the deflated mode.

To better understand this effect, we examine the residual history for $k = \hat{k}$, a value close to $k_{3,3}$.
\Cref{fig:cavityTrajRes} revealed that the first plateau in the classic GMRES residual history (green $\circ$) is due to the smallest eigenvalue.
As shown in \Cref{fig:cavityResRestart}, deflating the corresponding eigenvector $u_{3,3}$ (green $\times$) eliminates this first plateau in the residual curve.
The remainder of the curve remains nearly unchanged, as deflation modifies only the contribution of the targeted eigenvector and does not affect the rest of the spectrum.

Finally, we test deflation of the negative part of the spectrum to remove the indefinite part of the problem.
All the eleven negative eigenvalues correspond to Laplace modes whose wavenumbers are below $\hat{k}$ (\Cref{tab:cavityEigval} and~\Cref{fig:cavityPlot}).
The resulting residual history (green $\square $) shows that all plateaus present in the non-deflated case have disappeared, confirming the strong connection between small eigenvalues and the observed stagnation behavior.

\begin{table}
\caption{Cavity benchmark. Mode numbers and negative eigenvalues of $\mathbf{A}$ for $k = \hat{k}$.}\label{tab:cavityEigval}
\centering\small\vspace{-2mm}
\begin{tabular}{|c|ccccccc|}
\hline
    $(n,m)$ &
    $(1,1)$ &
    \begin{tabular}{@{}c@{}} $(1,2)$ \\ $(2,1)$ \end{tabular} &
    $(2,2)$  &
    \begin{tabular}{@{}c@{}} $(1,3)$ \\ $(3,1)$ \end{tabular} &
    \begin{tabular}{@{}c@{}} $(2,3)$ \\ $(3,2)$ \end{tabular} &
    \begin{tabular}{@{}c@{}} $(1,4)$ \\ $(4,1)$ \end{tabular} &
    $(3,3)$  \\ \hline
    $\lambda$ & -0.1359 & -0.1101 & -0.0846 & -0.0676 & -0.0425 & -0.0092 & -0.0010 \\
    \hline
    \end{tabular}
\end{table}


\subsection*{Influence of the preconditioning combined with the deflation}

Building on the previous observations, we now examine the effect of combining preconditioning with deflation.
As detailed in~\Cref{subsec:acceleration}, the system is preconditioned by the ILU decomposition of the CSL matrix.
\Cref{fig:cavityNbIt} shows that, for all wavenumbers, the use of ILU$(\mathbf{A}_\varepsilon)$ as a preconditioner consistently reduces the number of GMRES iterations compared to the unpreconditioned case.
When deflation of $u_{3,3}$ is added to the preconditioning strategy (pink $\times$), the iteration count decreases even further, and the small peak near $k_{3,3}$—previously associated with resonance—is eliminated.

This trend is confirmed by the residual histories in \Cref{fig:cavityResPrecRestart}.
While ILU preconditioning alone significantly shortens the plateaus and accelerates convergence after each stagnation phase, some residual plateaus still remain.
However, deflating the eigenvector associated with the resonant mode $u_{3,3}$ removes the first plateau entirely.
Moreover, when deflation is extended to the eleven eigenvectors corresponding to the eleven negative eigenvalues (\Cref{tab:cavityEigval}), all plateaus vanish, and the residual decays rapidly in a nearly linear fashion.


\subsection*{Influence of the restarted version of GMRES}

Having explored the effects of preconditioning and deflation in full GMRES, we now turn to a more practical variant commonly used in large-scale applications: restarted GMRES.\ 

In the standard implementation of GMRES, the cost per iteration grows with the number of iterations, as each step requires storing and orthogonalizing against an expanding Krylov basis.
To limit memory and computational overhead, GMRES is often restarted every $m$ iterations, retaining only the last Krylov vector to start the next cycle.
This variant, known as GMRES$(m)$, reduces per-iteration cost but sacrifices theoretical guarantees of convergence.

\Cref{fig:cavityResRestart} and~\Cref{fig:cavityResPrecRestart} show the residual histories for GMRES$(m)$ with a restart parameter $m=25$, in both unpreconditioned and ILU-preconditioned settings (blue and yellow curves, respectively).
In both cases, and without deflation, convergence stagnates rapidly and the residual plateaus—even when preconditioning is applied.
We observed similar behavior with larger restart values (e.g., $m=80$, not shown), indicating that the issue is robust.

However, when deflation is introduced, GMRES$(m)$ recovers its ability to converge.
In both the unpreconditioned and preconditioned cases, the residual drops more steadily, and the stagnation phases are significantly reduced or eliminated.
This improvement can be explained as follows: in the restarted setting, the Krylov subspace may not have enough iterations to accurately approximate the small eigenvalues responsible for stagnation.
Deflation explicitly removes these difficult modes, making GMRES$(m)$ more effective.
This highlights an important practical benefit: deflation enhances the robustness of restarted GMRES, particularly near resonant wavenumbers where convergence is otherwise problematic.

\section{Numerical experiments: scattering problem with quasi-resonances}\label{sec:scatteringProblems}

We now turn to a more challenging configuration: the diffraction of a plane wave by an open cavity. Unlike the previous benchmark, where the system matrix was real, symmetric, and associated with exact resonance modes, the matrix $\mathbf{A}$ here is complex and non-normal. Although exact resonances are absent, the problem features \emph{quasi-modes}, which can lead to similar convergence difficulties for iterative solvers. As before, we use numerical experiments to study GMRES convergence and the effect of acceleration techniques.

\subsection*{Description of the benchmark}

We consider the scattering of a time-harmonic plane wave $u_\mathrm{inc}(x,y) = e^{ik(\cos(\theta) x + \sin(\theta) y)}$ by an obstacle $\mathcal{O}$ in free space $\mathbb{R}^2$, where $\theta$ is the incident angle. The boundary of the obstacle is assigned either a Dirichlet condition (Dirichlet case) or a Neumann condition (Neumann case).

To model the unbounded domain, we truncate the computational domain to a square region $\Omega_\mathrm{dom} = {(-L, L)}^2 \setminus \mathcal{O}$ and surround it with perfectly matched layers (PMLs) of thickness $L_\mathrm{pml}$, denoted $\Omega_\mathrm{pml}$~\cite{BermudezHervellaNietoEtAl2004EBP,BermudezHervellaNietoEtAl2007OPM}.
The PMLs absorb outgoing waves and eliminate reflections from the artificial boundary, on which a homogeneous Dirichlet condition is imposed. The PML formulation introduces complex coordinate stretchings:
\[
    \gamma_x(x) := 1 + \frac{i\sigma_x(x)}{k}, \quad \gamma_y(y) := 1 + \frac{i\sigma_y(y)}{k},
\]
with absorption profiles:
\[
    \sigma_x(x) := \frac{\mathbbm{1}_\mathrm{pml}(x)}{L_\mathrm{pml} - |x| + L}, \quad
    \sigma_y(y) := \frac{\mathbbm{1}_\mathrm{pml}(y)}{L_\mathrm{pml} - |y| + L}.
\]
The scattered field $u$ then satisfies:
\begin{align*}
    \left\{
    \begin{aligned}
        \frac{\partial}{\partial x} \left( \frac{\gamma_y}{\gamma_x} \frac{\partial u}{\partial x} \right) +
        \frac{\partial}{\partial y} \left( \frac{\gamma_x}{\gamma_y} \frac{\partial u}{\partial y} \right) +
        \gamma_x \gamma_y k^2 u     & = 0 \quad                                         & \text{in } \Omega_\mathrm{dom} \cup \Omega_\mathrm{pml}, \\
        u = -u_\mathrm{inc} \quad \text{or} \quad
        \partial_{\boldsymbol{n}} u & = -\partial_{\boldsymbol{n}} u_\mathrm{inc} \quad & \text{on } \Gamma_\mathrm{obs},                          \\
        u                           & = 0 \quad                                         & \text{on } \Gamma_\mathrm{ext}.
    \end{aligned}
    \right.
\end{align*}

The obstacle $\mathcal{O}$ is an open rectangular cavity of length $L_\mathcal{O}$ and opening width $l_\mathcal{O}$ (see \Cref{fig:scatteringPML}).
In contrast to closed domains, this setup does not admit true resonance modes but supports \emph{quasi-modes}, i.e., modes that approximately satisfy the homogeneous equation and are spatially localized.
This configuration is a case of a weak-trapping (and, more specifically, parabolic)~\cite{ChandlerWildeSpenceEtAl2020HFB}. The decay of \(\epsilon(k_j)\) in~\Cref{def:quasimodes} is therefore polynomial.

\begin{figure}[tb]
    \centering
    \includegraphics[width=0.4\textwidth]{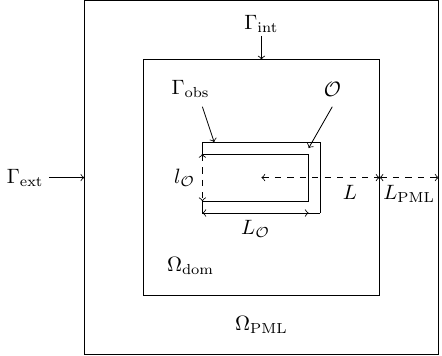}
    \caption{Scattering benchmark. Computational domain with PML.}\label{fig:scatteringPML}
\end{figure}

In the Dirichlet case, quasi-modes are known to accumulate near wavenumbers $\pi n / l_\mathcal{O}$ for $n \in \mathbb{N}$~\cite[Theorem 5.1]{ChandlerWildeGrahamEtAl2009CNE}.
However, numerical simulations indicate the presence of additional quasi-modes at other wavenumbers.
These are often close to the resonance frequencies of a closed rectangular cavity with the same dimensions.
In that case, the Dirichlet resonance wavenumbers are given by
\[
    k_{n,m} = \pi \sqrt{\frac{m^2}{L_\mathcal{O}^2} + \frac{n^2}{l_\mathcal{O}^2}}, \quad m,n \in \mathbb{N}.
\]
For the Neumann case (used in our experiments), the expected quasi-modes are close to the resonance frequencies of a closed rectangular cavity with Neumann boundary conditions on all sides except the left edge (Dirichlet), given by
\[
    k_{n,m} = \pi \sqrt{\frac{{(m + 1/2)}^2}{L_\mathcal{O}^2} + \frac{n^2}{l_\mathcal{O}^2}}, \quad m,n > 0.
\]
In both cases, the quasi-modes inside the cavity resemble the eigenmodes of the corresponding closed configuration.

After discretization using finite elements, the problem leads to a linear system $\mathbf{A} \mathbf{u} = \mathbf{b}$.
The resulting matrix $\mathbf{A}$ is complex, non-normal, and has a spectrum that includes complex-valued eigenvalues.

All numerical results presented in this section correspond to the Neumann case.
The Dirichlet case shows similar behavior and is omitted for brevity.
We use the following parameters $
    L_\mathcal{O} = 1.3, \quad l_\mathcal{O} = 0.4, \quad \theta = \frac{4\pi}{10}, \quad \mathsf{h} = \frac{1}{20}, \quad \text{$P_3$ continuous elements}, \quad \mathsf{tol} = 10^{-6}, \quad D_\lambda \geq 30$.
This configuration leads to approximately 13,500 degrees of freedom.

\begin{figure}
    \centering
    \begin{subfigure}{\textwidth}
        \centering
        \subcaption{Number of GMRES iterations}
        \includegraphics[height=0.27\textheight]{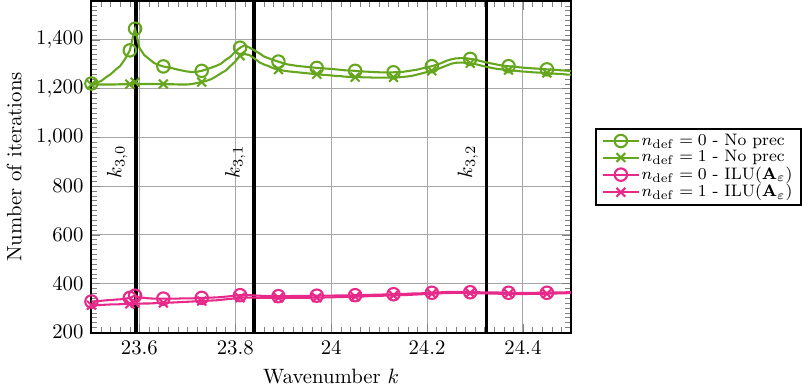}\label{fig:openCavityNbit}
    \end{subfigure} \\
    \begin{subfigure}{\textwidth}
        \centering
        \subcaption{Convergence history without preconditioning}
        \includegraphics[height=0.27\textheight]{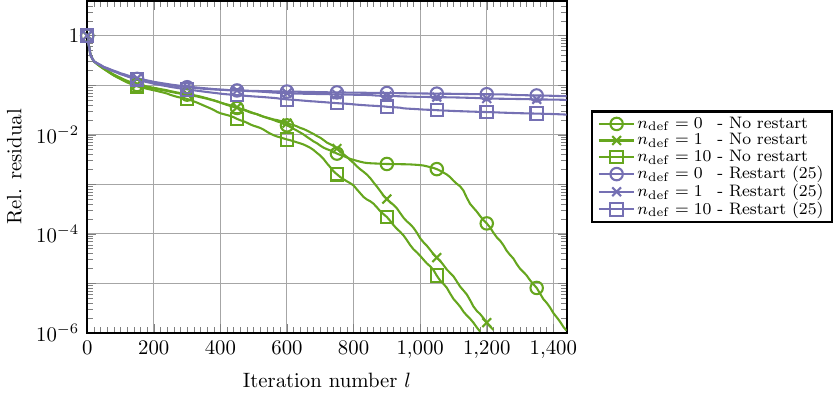}\label{fig:openCavityResRestart}
    \end{subfigure} \\
    \bigskip
    \begin{subfigure}{\textwidth}
        \centering
        \caption{Convergence history with preconditioning $\mathrm{ILU}(\mathbf{A}_\varepsilon)$}
        \includegraphics[height=0.27\textheight]{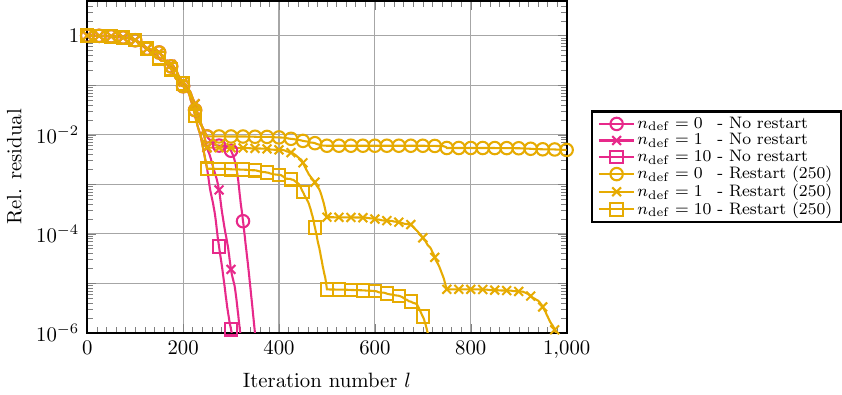}\label{fig:openCavityResPrecRestart}
    \end{subfigure}
    \caption{Scattering benchmark. Number of GMRES iteration to reach the relative residual $10^{-6}$, and residual history for $k=\hat{k}$ in different configurations: with/without deflation, with/without preconditioning, with/without restart. $\mathrm{ILU}(\mathbf{A}_\varepsilon)$ refers to the preconditioning by an ILU factorization of the CSL.
        $n_{\text{def}}$ is the number of deflated eigenvalues. $n_{\text{def}}=1$ means deflation of the eigenvalue closest to zero; $n_{\text{def}}=11$ is deflation of all negative eigenvalues.
        Figure 6(a): There are 100 sample points distributed uniformly, and 3 points located around the peak $k_{3,0}$.}
\end{figure}


\subsection*{Influence of the wavenumber on the number of iterations}

We now investigate how the wavenumber $k$ affects the convergence of GMRES for the open cavity problem. \Cref{fig:openCavityNbit} shows the number of GMRES iterations required to reach a residual tolerance of $10^{-6}$ as a function of $k \in [23.5, 24.5]$ (green $\circ$).

As in the closed cavity benchmark, we observe peaks in the iteration count near the expected wavenumbers of quasi-modes, specifically around $k_{3,0} \approx 23.591$, $k_{1,3}$, and $k_{2,3}$.
However, the peaks here are less pronounced than in the cavity benchmark, reflecting the fact that quasi-modes—unlike true resonance modes—do not make the problem singular, but still degrade the performance of iterative solvers.
Among the peaks, the one near $k_{3,0}$ is the most prominent.
To better understand the behavior of GMRES in this regime, we focus in the following on the wavenumber $\hat{k} := 23.591$.


\subsection*{Influence of the wavenumber on the residual history}

\begin{figure}[htb!]
    \centering
    \includegraphics[width=0.85\textwidth]{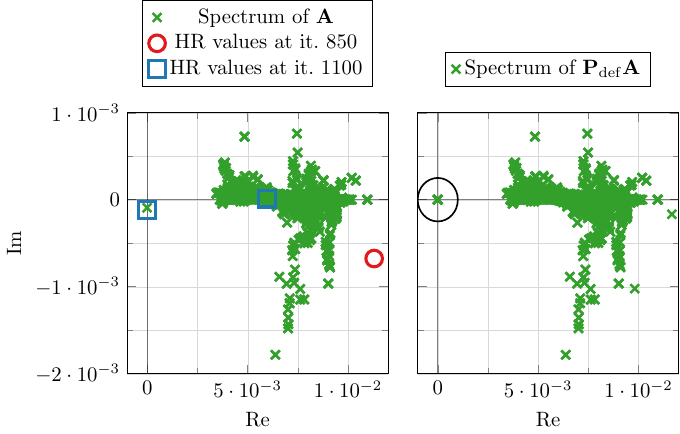}
    \caption{Scattering benchmark. HR values and deflated spectrum.}\label{fig:openCavityspectrum}
\end{figure}

We now examine the convergence behavior near the wavenumber $\hat{k} = 23.591$, where the influence of a quasi-mode was observed in the iteration count.

\Cref{fig:openCavityResRestart} shows the GMRES residual history without any acceleration technique.
Compared to the cavity problem, convergence is significantly slower.
Only a single plateau is observed, between iterations 800 and 1050, after which the residual begins to decrease more rapidly.
As in the previous benchmark, this stagnation can be explained using HR values and \Cref{thm:harmonicRitzValues}.

Since the matrix $\mathbf{A}$ has a complex spectrum, we examine the distribution of HR values in the complex plane.
\Cref{fig:openCavityspectrum} (left) compares the positions of HR values at iteration 850—during the plateau—and iteration 1100—just after the plateau ends.
At iteration 850, we observe that an HR value is approaching the smallest eigenvalue of $\mathbf{A}$, located in the lower left region of the complex plane.
By iteration 1100, this eigenvalue is well approached, marking the end of the plateau and the onset of faster convergence.

Although the spectrum is now complex, the convergence behavior remains consistent with the interpretation provided by \Cref{thm:harmonicRitzValues}.
In this benchmark, the slower convergence of HR values towards the smallest eigenvalue explains the extended duration of the plateau phase.


\subsection*{Influence of the deflation}
To mitigate the slow convergence near the quasi-mode, we now assess the impact of deflation on the solver’s performance.

Although no analytical expression is available for the quasi-modes in this configuration, it is possible to construct approximate deflation vectors based on physical intuition.
We use eigenvectors of the corresponding closed cavity problem—with Dirichlet conditions on the left edge and Neumann conditions on the remaining edges—extended by zero outside the cavity.

\Cref{fig:openCavityspectrum} (right) displays the spectrum of the deflated matrix $\mathbf{P}_\mathrm{def} \mathbf{A}$.
The eigenvalue associated with the quasi-mode is reduced to zero, confirming that the deflated vector effectively captures the problematic mode.

The influence of this deflation on convergence is shown in \Cref{fig:openCavityNbit} (green $\times$).
As expected, the peak near $k_{3,0}$—previously linked to the quasi-mode—is entirely removed.
Moreover, the number of GMRES iterations is slightly reduced across other values of $k$, even away from the targeted quasi-mode.

To explore this further, we compare the residual histories with and without deflation at $k = \hat{k}$, near $k_{3,0}$ (\Cref{fig:openCavityResRestart}).
The plateau observed in the unaccelerated case disappears with deflation.
However, between iterations 600 and 800, the residual with deflation is briefly higher than in the non-deflated case.
This is likely due to the fact that the deflated vector only approximates the true quasi-mode.

We also test the deflation of multiple approximate quasi-modes (green $\square$), but the improvement in convergence is modest, suggesting diminishing returns beyond the first key mode.


\subsection*{Influence of preconditioning combined with deflation}

We now assess how ILU preconditioning interacts with deflation in improving GMRES convergence.

As shown in \Cref{fig:openCavityNbit}, applying the ILU$(\mathbf{A}_\varepsilon)$ preconditioner systematically reduces the number of iterations required to reach the convergence tolerance, across all wavenumbers.
Although the benefit of combining preconditioning with deflation is not visually striking at the scale of the plot, the effect is measurable.
At $k = \hat{k}$, for example, the number of iterations drops from 350 (preconditioning only) to 319 when deflation is added—an improvement of approximately 9\%.

The residual histories in \Cref{fig:openCavityResPrecRestart} provide a more detailed view.
With preconditioning alone (pink $\circ$), the plateau associated with the quasi-mode persists, though it is shorter and followed by a sharp decline in the residual.
When deflation of the approximate quasi-mode is added, the plateau is eliminated entirely, and convergence becomes both faster and smoother.

These results confirm that ILU$(\mathbf{A}_\varepsilon)$ is effective in mitigating spectral clustering effects and improving convergence.
Nonetheless, the combination of preconditioning and deflation remains the most effective strategy, even when the deflated vectors are only approximations of the true quasi-modes.


\subsection*{Influence of the restarted version of GMRES}

We now turn to the restarted GMRES$(m)$, to evaluate its behavior in this more challenging setting. As with the cavity problem, restarting is used to reduce the memory footprint and per-iteration cost.
\Cref{fig:openCavityResRestart} (for $m = 25$) and~\Cref{fig:openCavityResPrecRestart} (for $m = 250$) display the residual histories with and without preconditioning.

Without deflation, we note that GMRES$(m)$ fails to converge—regardless of preconditioning—even when the restart value is increased (e.g., $m = 300$, not shown).
This highlights the difficulty of resolving the quasi-mode and overcoming the effects of non-normality and indefiniteness with restarts alone.

In contrast to the cavity benchmark, deflation alone (blue $\times$ and $\square$) is not sufficient to guarantee convergence within a reasonable number of iterations.
This can be attributed to the compounded challenges of non-normality, indefiniteness, and the complexity of the quasimodal structure in this benchmark.

Only the combination of deflation and preconditioning (yellow $\times$) yields successful convergence, though a high restart value is required: for $m \leq 250$, the tolerance is not reached within 2000 iterations.
However, increasing the number of deflated vectors (yellow $\square$) improves convergence significantly, making it possible to reduce the restart parameter while still achieving convergence (detailed results not shown here). These results underscore the importance of using both preconditioning and deflation in restarted settings, especially when dealing with non-normal systems influenced by quasi-modes.
\section{Conclusion}\label{sec:conclusion}

This study examined the convergence behavior of GMRES for the finite element solution of Helmholtz problems, emphasizing the impact of resonance and quasi-modes.

Novel convergence bounds based on harmonic Ritz values have been proposed for general systems.
These bounds are able to describe nonlinear residual histories, offering a detailed explanation of residual plateaus.
Through numerical experiments on both closed and open cavity benchmarks, we observed that convergence stalls align with wavenumbers associated with resonances or quasi-modes.
These stalls were accurately predicted by the evolution of harmonic Ritz values.

The main observations of the numerical studies can be summarized as follows:
\begin{itemize}
    \setlength{\itemsep}{0pt}
    \setlength{\parskip}{0pt}
    \setlength{\parsep}{0pt}
    \item GMRES convergence is highly slowed and plateaus because of small eigenvalues due to resonances or quasi-resonances. In particular, restarted GMRES cannot achieve convergence in some cases.
    \item Deflation of the eigenvectors corresponding to resonances or quasi-resonances, even when approximated, removes plateaus and accelerates convergence.
    This approach requires knowledge of the eigenvectors and slightly increases the cost per iteration, which depends on the number of the deflated eigenvectors.
    
    \item Although deflation and preconditioning techniques improved convergence when used individually, combining them proved to be the most effective approach. For open domain problems, ILU$(\mathbf{A}_\varepsilon)$ preconditioning enhanced performance, albeit at the cost of an additional matrix product. However, the convergence of restarted GMRES was only obtained when used in conjunction with deflation.
\end{itemize}

These results underscore the value of combining an a priori knowledge of spectral properties with tailored numerical techniques to design robust solvers for high-frequency wave propagation problems.
Future work includes extending these strategies to 3D configurations and exploring recent preconditioning techniques based on domain decomposition.

\subsection*{Acknowledgments}
This work was supported in part by the \textit{ANR JCJC project WavesDG} (grant ANR-21-CE46-0010) and by the \textit{Agence de l'Innovation de Défense} (AID) through the \textit{Centre Interdisciplinaire d'Etudes pour la D\'efense et la S\'ecurit\'e} (CIEDS) under the project 2022 ElectroMath. The authors are also grateful to Zoïs Moitier for insightful discussions and valuable feedback that contributed to this work.

\small
\setlength{\bibsep}{0pt plus 0ex}
\bibliographystyle{siam}
\bibliography{main}

\end{document}